\let\mathcal\mathscr
\newtheorem{thm}{Theorem}[section]
\newtheorem{theorem}[thm]{Theorem}
\newtheorem{lemma}[thm]{Lemma}
\theoremstyle{definition}
\numberwithin{equation}{section}
\renewcommand{\phi}{\varphi}
\renewcommand{\rho}{\varrho}
\newcommand{\A}{\mathbf{A}}
\newcommand{\ZZ}{\mathbb{Z}}
\newcommand{\NN}{\mathbb{N}}
\newcommand{\QQ}{\mathbb{Q}}
\renewcommand{\leq}{\leqslant}
\renewcommand{\geq}{\geqslant}
\newcommand{\m}{\mathbf{m}}
\renewcommand{\k}{\mathbf{k}}
\DeclareMathOperator{\Mod}{mod}
\newcommand{\biquad}[2]{\QQ(\sqrt{#1},\sqrt{#2})}
\newcommand{\modstar}[1]{\left(\ZZ/#1 \ZZ\right)^{\times}}
\newcommand{\legendre}[2]{\genfrac{(}{)}{}{}{#1}{#2}}
\begin{document}

\title[Hasse Norm Principle in Biquadratics]{The Hasse Norm Principle For Biquadratic Extensions}

\author{Nick Rome}
\address{School of Mathematics\\
University of Bristol \\ Bristol\\ BS8 1TW\\ UK}
\email{nick.rome@bristol.ac.uk}
\urladdr{https://people.maths.bris.ac.uk/~nr16985/}

\maketitle

\begin{abstract}
We give an asymptotic formula for the number of biquadratic extensions of the rationals of bounded discriminant that fail the Hasse norm principle.
\end{abstract}

\thispagestyle{empty}

\section{Introduction}
Let $K/k$ be a Galois extension of number fields with Galois group $G$. If $\A_K^*$ denotes the set of id{\`e}les of $K$ and $N_{K/k}$ the usual norm map then we say that $K$ satisfies the Hasse norm principle if $$N_{K/k}K^* = N_{K/k}\A_K^* \cap k^*.$$
The Hasse norm theorem states that this principle is satisfied by every cyclic extension (see e.g \cite[p.185]{CandF}). Recently, Frei--Loughran--Newton \cite{FLN} have shown  that for every non-cyclic abelian group $G$ there is an extension $K/k$ with Galois group $G$, for which the Hasse norm principle fails. Moreover, if $G \cong \mathbb{Z}/n\mathbb{Z} \oplus (\mathbb{Z}/Q\mathbb{Z})^r$ where $r \in \ZZ_{\geq 1}$ and $Q$ is the smallest prime dividing $n$ then 0\% of extensions of $k$  with Galois group $G$ fail the Hasse norm principle. However this density result is not explicit, so there is no estimate for the frequency of extensions with a given Galois group that fail the Hasse norm principle.

In this paper we'll investigate Hasse norm principle failures in the simplest abelian, non-cyclic case where $G= ( \mathbb{Z}/2\mathbb{Z})^2$ and $k = \QQ$. Let $\Delta_K$ denote the discriminant of the field $K=\biquad{a}{b}$. Note that for all such $K$, we have $\Delta_K>0$ (c.f. \eqref{disc}). We will first give a proof of the number of such fields with discriminant bounded above by $X$, recovering work of Baily \cite{Baily}. 
\begin{theorem}\label{Biquad}
Let $S(X)$ denote the number of distinct biquadratic extensions $K/\mathbb{Q}$  such that $\Delta_K \leq X$.
Then $$S(X) = \frac{23}{960} \sqrt{X} \log^2 X \prod_p \left( 1- \frac{1}{p}\right)^3\left(1+\frac{3}{p}\right)+O\left(\sqrt{X} \log X\right).$$
\end{theorem}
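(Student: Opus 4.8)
The plan is to parametrise biquadratic fields by their quadratic subfields and to reduce the count to a standard sieve sum. Kummer theory sets up a bijection between biquadratic extensions $K/\QQ$ and two-dimensional $\FF_2$-subspaces $V\subseteq\QQ^*/(\QQ^*)^2$: if $K=\biquad{a}{b}$ then $V$ is the span of the classes of $a$ and $b$, and the three nonzero elements of $V$ are the classes of squarefree integers $d_1,d_2,d_3$ (with $d_3$ the squarefree part of $d_1d_2$) cutting out the three quadratic subfields of $K$. A two-dimensional $\FF_2$-space has exactly $3\cdot 2=6$ ordered pairs of distinct nonzero vectors, each of which is a basis, so
\[
6\,S(X)=\#\bigl\{(d_1,d_2):d_1,d_2\text{ squarefree},\ d_1\neq d_2,\ d_1,d_2\neq1,\ \Delta_K\leq X\bigr\},
\]
where now $K=\biquad{d_1}{d_2}$. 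By the conductor--discriminant formula $\Delta_K=D_1D_2D_3$, where $D_i$ is the discriminant of $\QQ(\sqrt{d_i})$; as an even number of the $d_i$, hence of the $D_i$, are negative, this re-proves the positivity of $\Delta_K$.

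The next step is a prime-by-prime analysis of $\Delta_K$. At an odd prime $p$ either $p$ divides none of $d_1,d_2,d_3$, or it divides exactly two of them, and in the latter case $p^2\parallel D_1D_2D_3$; hence the odd part of $\Delta_K$ is $m^2$, where $m$ is the odd part of $\mathrm{lcm}(|d_1|,|d_2|)$. The $2$-part of $\Delta_K$ is $2^{\kappa}$, where $\kappa=v_2(D_1)+v_2(D_2)+v_2(D_3)\in\{0,4,6,8\}$ is determined by $d_1,d_2$ modulo $8$: there are finitely many \emph{$2$-adic types}, each fixing a value $\kappa_T$ together with congruence conditions on $d_1,d_2$ modulo $8$. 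For a fixed type the condition $\Delta_K\leq X$ becomes exactly $m\leq 2^{-\kappa_T/2}\sqrt X$.

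Hence the problem reduces, for each of the finitely many types, to counting squarefree $d_1,d_2$ in prescribed residue classes modulo $8$ whose odd parts have least common multiple at most $Y\asymp\sqrt X$. Writing those odd parts as $gu_1$ and $gu_2$ with $g,u_1,u_2$ pairwise coprime and squarefree, and using that a squarefree $m$ has $3^{\omega(m)}$ ordered factorisations into three pairwise coprime parts, the core quantity is $\sum_{m\leq Y,\,m\text{ odd}}\mu^2(m)3^{\omega(m)}$, sieved into residue classes by Dirichlet characters modulo $8$. Since $\sum_m\mu^2(m)3^{\omega(m)}m^{-s}=\prod_p(1+3p^{-s})$ equals $\zeta(s)^3$ times an Euler product absolutely convergent for $\Re s>\tfrac12$, the Dirichlet hyperbola method (or Selberg--Delange, or a Tauberian theorem) gives
\[
\sum_{m\leq Y}\mu^2(m)3^{\omega(m)}=\tfrac12\,Y\log^2 Y\prod_p\Bigl(1-\tfrac1p\Bigr)^3\Bigl(1+\tfrac3p\Bigr)+O(Y\log Y),
\]
and likewise with the parity and congruence restrictions in place; the non-principal characters contribute only to the error, since $L(s,\chi)$ is holomorphic at $s=1$ for $\chi\neq\chi_0$. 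Substituting $Y\asymp\sqrt X$, so that $\log^2 Y\sim\tfrac14\log^2 X$, and summing the resulting main terms over the finitely many $2$-adic types — weighted by $2^{-\kappa_T/2}$ — and over the admissible residue classes yields the stated main term; the hyperbola-method error, the non-principal characters, and the $O(\sqrt X)$ degenerate configurations (those with $d_1=d_2$ or $d_i=1$) are each $O(\sqrt X\log X)$. The exact rational constant $\tfrac{23}{960}$ is what this finite accounting produces once $\prod_p(1-\tfrac1p)^3(1+\tfrac3p)$, including its factor at $p=2$, has been pulled out front.

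The estimation of $\sum_{m\leq Y}\mu^2(m)3^{\omega(m)}$ with congruence conditions is routine, and the reduction to two-dimensional $\FF_2$-subspaces is formal. I expect the main obstacle to be the $2$-adic bookkeeping: one has to list every ramification type at $2$, pin down $\kappa_T$ in each, carry the induced congruences modulo $8$ through the character-sum argument so that they affect only the error, and reassemble the pieces correctly so as to recover the precise constant $\tfrac{23}{960}$ rather than merely the order of magnitude $\sqrt X\log^2 X$.
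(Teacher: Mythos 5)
Your proposal follows essentially the same route as the paper: parametrise $K$ by its three quadratic subfields (equivalently, by the pairwise coprime squarefree triple, which is your $(g,u_1,u_2)$ and the paper's $(m,a_1,b_1)$), reduce the count to $\sum_{m\leq Y}\mu^2(m)3^{\omega(m)}$ twisted by characters to a fixed small modulus, evaluate it via Selberg--Delange with $\zeta(s)^3$ as the dominant factor, and recover the constant from a finite sum over sign and $2$-adic types. The one step you leave implicit — the finite accounting that yields $23/960$ — is also the step the paper dispatches in one line, by summing $1/(c_{\bm{\delta},\bm{\epsilon},\mu,\alpha,\beta}2^{\mu+\alpha+\beta})$ over all admissible types to get $23$.
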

There have been several investigations centered on counting the number of extensions of a number field of bounded discriminant with a given Galois group, for various different choices of Galois group (see \cite{CDO} for a survey). In particular, Baily \cite[Theorem 8]{Baily} produced Theorem \ref{Biquad} in 1980, using a simple argument about how many primes could divide the discriminant. Baily's original result did not include information about the error terms although lower order terms are now known \cite[Section 2.5]{CDO}. We use a different approach to Baily and have chosen to include the proof of Theorem \ref{Biquad} as the method of computation serves to illustrate the proof of the following theorem which is the main result of this paper.
\begin{theorem}\label{BiquadHNP}
Let $\widetilde{S}(X)$ denote the number of distinct biquadratic extensions $K/\mathbb{Q}$  such that $\Delta_K \leq X$ and $K$ fails the Hasse norm principle.
Then $$\widetilde{S}(X) = \frac{1}{3\sqrt{2\pi}}\sqrt{X\log X}\, \prod_p\left( 1-\frac{1}{p}\right)^{\!\frac{3}{2}} \!\!\left(1+ \frac{3}{2p} \right)+ O\!\left(\sqrt{X}\right)\!.$$
\end{theorem}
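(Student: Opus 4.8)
The plan is to convert the failure of the Hasse norm principle into a purely arithmetic condition on the quadratic subfields, and then count the relevant fields by an analytic argument --- the same shape of argument as the proof of Theorem~\ref{Biquad}, but with an extra layer of quadratic character sums.

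\textbf{Step 1: the arithmetic criterion.} Write $G=\Gal(K/\QQ)\cong(\ZZ/2\ZZ)^2$ and let $T$ be the norm-one torus of $K/\QQ$. By Poitou--Tate duality the knot group $N_{K/\QQ}\A_K^{*}\cap\QQ^{*}/N_{K/\QQ}K^{*}$ is governed by Tate cohomology in degree $-3$: one has $\hat H^{-3}(G,\ZZ)\cong\ZZ/2\ZZ$, whereas $\hat H^{-3}(G_v,\ZZ)=0$ for every cyclic decomposition group $G_v$ (and since $G$ is not cyclic no unramified or archimedean place can have $G_v=G$), so the knot group is non-trivial precisely when $G_v\neq G$ for all places $v$. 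Thus $K$ fails the Hasse norm principle if and only if no place of $\QQ$ has full decomposition group; this is implicit in \cite{FLN} and classical for $(\ZZ/2\ZZ)^2$. I would then parametrise: the three quadratic subfields of a biquadratic $K$ may be written $\QQ(\sqrt{ef}),\QQ(\sqrt{eg}),\QQ(\sqrt{fg})$ for a triple $\{e,f,g\}$ of pairwise coprime squarefree integers with $ef,eg,fg$ distinct --- a $12$-to-$1$ cover of the set of biquadratic fields (via permuting and globally negating $(e,f,g)$), the degenerate configurations contributing only $O(\sqrt X)$ --- and by the conductor--discriminant formula $\Delta_K$ is $(efg)^2$ times a bounded power of $2$ depending only on $e,f,g$ modulo $8$. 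An odd prime $p$ ramifies in $K$ exactly when it divides one of $e,f,g$; if $p\mid e$ then $p$ is unramified in $\QQ(\sqrt{fg})$ and has $G_p=G$ iff $p$ is inert there. Hence the criterion becomes: $\legendre{fg}{p}=1$ for every odd $p\mid e$, symmetrically with $f$ and $g$ in place of $e$, together with an explicit congruence modulo $8$ that rules out $G_2=G$.

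\textbf{Step 2: reduction to a weighted triple sum, and its main term.} Detecting each family of conditions by $\mathbf 1\!\left[\legendre{fg}{p}=1\ \forall\, p\mid e\right]=2^{-\omega(e)}\sum_{d\mid e}\legendre{fg}{d}$ and multiplying out, $\widetilde S(X)$ becomes (up to the $\tfrac1{12}$ symmetry factor, the modulo-$8$ conditions, and $O(\sqrt X)$ from degeneracies) a sum over pairwise coprime squarefree $e,f,g$ with $\Delta_K\leq X$ of
\[
2^{-\omega(efg)}\sum_{d_1\mid e}\ \sum_{d_2\mid f}\ \sum_{d_3\mid g}\ \legendre{fg}{d_1}\legendre{eg}{d_2}\legendre{ef}{d_3},
\]
in which coprimality of $e,f,g$ forces $d_1,d_2,d_3$ to be pairwise coprime. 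The diagonal $d_1=d_2=d_3=1$ gives the main term $M(X)=\sum 2^{-\omega(efg)}\mathbf 1[\Delta_K\leq X]$; after breaking into residue classes modulo $8$ its Dirichlet series is an Euler product with local factor $1+\tfrac32 p^{-2s}+O(p^{-4s})$, hence of the form $H(s)\,\zeta(2s)^{3/2}$ with $H$ holomorphic and non-vanishing near $s=\tfrac12$ and $H(\tfrac12)$ equal to $\prod_p(1-\tfrac1p)^{3/2}(1+\tfrac3{2p})$ times elementary constants. A Selberg--Delange (or Hankel-contour) argument --- a branch point of order $\tfrac32$ at $s=\tfrac12$, so $\Gamma(\tfrac32)=\tfrac{\sqrt\pi}{2}$ enters --- then yields $M(X)=\tfrac1{3\sqrt{2\pi}}\sqrt{X\log X}\,\prod_p(1-\tfrac1p)^{3/2}(1+\tfrac3{2p})+O(\sqrt X)$. (Deleting the character conditions, the very same computation with local factor $1+3p^{-2s}+\cdots$, i.e.\ $\zeta(2s)^3$ and a triple pole, recovers Theorem~\ref{Biquad} via Perron's formula.)

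\textbf{Step 3: the off-diagonal --- the crux.} It remains to show that the terms with $(d_1,d_2,d_3)\neq(1,1,1)$ contribute only $O(\sqrt X)$. Writing $e=d_1e'$, $f=d_2f'$, $g=d_3g'$ and factoring $\legendre{fg}{d_1}=\legendre{f}{d_1}\legendre{g}{d_1}$, etc., the inner sum splits --- apart from the single coupling constraint $\lvert e'f'g'\rvert\ll\sqrt X$ --- into a product of three sums of the form $\sum_n 2^{-\omega(n)}\chi(n)$ with $\chi$ a non-principal real character of conductor dividing $d_1d_2d_3$. The Dirichlet series of such a sum is $L(s,\chi)^{1/2}$ times a holomorphic factor, hence \emph{holomorphic} at $s=1$, so there is a saving over the trivial bound. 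The real difficulty is uniformity in $d_1,d_2,d_3$: for moduli up to a fixed power of $\log X$ the zero-free region gives a saving of any power of $\log X$ (a Siegel--Walfisz type input), whereas for larger moduli one must invoke P\'olya--Vinogradov, or in intermediate ranges the large sieve for real characters, and then check that summing these bounds against the weights $2^{-\omega(\cdot)}$ over the whole region $\lvert efg\rvert\ll\sqrt X$ stays below $\sqrt X$. Combining the diagonal main term, this off-diagonal estimate, the book-keeping over residues modulo $8$ and signs, and the degenerate cases then yields the theorem.
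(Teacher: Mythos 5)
Your overall architecture matches the paper's: the cyclic-decomposition-group criterion, the parametrisation by pairwise coprime squarefree triples, the detector $2^{-\omega(e)}\sum_{d\mid e}\legendre{fg}{d}$, and a Selberg--Delange evaluation of the diagonal (the paper packages the $\tfrac12$-power of $\zeta$ slightly differently, via a mean-value theorem for multiplicative functions with $\kappa=\tfrac12$ applied to the last free variable, but this is equivalent to your $\Gamma(3/2)$ computation). The gap is in Step 3. It is not true that every term with $(d_1,d_2,d_3)\neq(1,1,1)$ is killed by oscillation of a non-principal character. Writing $e=d_1e'$, $f=d_2f'$, $g=d_3g'$, the product of Jacobi symbols factors as
$$\legendre{d_2d_3}{d_1}\legendre{d_1d_3}{d_2}\legendre{d_1d_2}{d_3}\cdot\legendre{e'}{d_2d_3}\legendre{f'}{d_1d_3}\legendre{g'}{d_1d_2},$$
and the characters act on $e',f',g'$, not on the $d_i$. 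In the ``anti-diagonal'' corner $e'=f'=g'=1$ (i.e.\ $d_1=e$, $d_2=f$, $d_3=g$), and more generally whenever $e',f',g'$ are all small, there is nothing left to oscillate: by quadratic reciprocity the first factor is a sign depending only on $(d_1,d_2,d_3)$ modulo $8$, so within each residue class this corner contributes $\sum 2^{-\omega(d_1d_2d_3)}\cdot(\pm1)\asymp\sqrt{X\log X}$ --- the same order as your main term, not $O(\sqrt X)$. This contribution disappears only because the reciprocity sign, summed over the admissible residue classes modulo $8$ and the sign choices of $(e,f,g)$, averages to zero; that algebraic cancellation (the paper's Lemma \ref{ucsum}) is a separate and essential step which your plan does not contain, and without it your asymptotic constant cannot be justified.

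A secondary, but still substantive, issue is the treatment of the genuinely oscillating ranges. The sums you need to bound are of the shape $\sum_{n}\mu^2(n)2^{-\omega(n)}\chi(n)$ with coprimality and mod-$8$ constraints; P\'olya--Vinogradov does not apply to such non-smooth weights, and $L(s,\chi)^{1/2}$-type arguments give nothing uniform in the conductor. The workable scheme (and the one the paper uses, following Friedlander--Iwaniec) is: when one side of a symbol $\legendre{m}{n}$ has modulus below $(\log X)^B$, use a Siegel--Walfisz-type estimate for the weighted sum; when both sides exceed $(\log X)^B$, forget all multiplicative structure and apply the bilinear estimate $\sum\sum\alpha_m\beta_n\legendre{m}{n}\ll MN(M^{-1/6}+N^{-1/6})(\log MN)^{7/6}$ (Heath-Brown's large sieve for real characters, via \cite[Lemma 2]{FrandIw}). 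Your mention of the large sieve in ``intermediate ranges'' points in the right direction, but you should make the bilinear decomposition the backbone of Step 3 and drop P\'olya--Vinogradov; and you must then notice that the ranges where this machinery gives nothing are exactly the near-diagonal and near-anti-diagonal corners discussed above, the latter of which requires the sign-cancellation argument.
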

In particular, these two results combined recover the Frei--Loughran--Newton result in this setting (i.e. that $0\%$ of biquadratic extensions of $\QQ$ fail the Hasse norm principle). 

\subsection{Layout of the paper}
In Section \ref{Criteria}, we will develop specific conditions on the integers $a$ and $b$ that ensure that the extension $K= \biquad{a}{b}$ fails the Hasse norm principle. Section 3 is devoted to the proof of the Theorem \ref{Biquad}. Theorem \ref{BiquadHNP} is proven in Section 4 using a similar approach. The main difference is that to apply the criteria developed in Section \ref{Criteria} we must sum a product of Jacobi symbols and to do this we incorporate ideas of Friedlander--Iwaniec \cite{FrandIw}.

\subsection*{Acknowledgements}
I would like to thank Daniel Loughran for suggesting this problem and my supervisor Tim Browning for bringing it to my attention, as well as for his continued advice on ways to proceed. I would also like to thank Rachel Newton for several useful conversations and R{\'e}gis de la Bret{\`e}che for numerous helpful suggestions.

\section{Criteria for Hasse Norm Principle failure}\label{Criteria}
In this section, we'll describe criteria on the integers $a$ and $b$ that determine when the extension $\QQ(\sqrt{a}, \sqrt{b})$ fails the Hasse norm principle. 
We can then sum over the $a$ and $b$ satisfying these criteria to get Theorem \ref{BiquadHNP}.

First, we describe how to count $a$ and $b$ that define unique biquadratic extensions of $\QQ$. Note that $K$ has 3 quadratic subfields
$$k_1 = \QQ(\sqrt{a}), \, k_2 = \QQ(\sqrt{b}) \text{ and } k_3 = \QQ(\sqrt{ab}/(a,b)).$$
Each of these quadratic fields can be uniquely identified by a single squarefree integer so fix $k_1 = \QQ(\sqrt{a})$ and $k_2 = \QQ(\sqrt{b})$. Let $m=(a,b)$ so that
 \begin{equation}\label{unique}a = ma_1, b= mb_1 \text{ and } (a_1,m) = (a_1, b_1) = (b_1, m) = 1.\end{equation}
Then $$k_1 = \QQ(\sqrt{ma_1}), \, k_2 = \QQ(\sqrt{mb_1}) \text{ and }k_3 = \QQ(\sqrt{a_1b_1}).$$ It is certainly true that specifying $m, a_1, b_1$ will determine $K$. Moreover since $K$ is determined uniquely by its subfields and these subfields by the choice of $m, a_1$ and $b_1$, this choice uniquely determines $K$ up to relabelling. 

We can write the discriminant of $K$ in terms of $(m,a_1,b_1)$ as follows.
By \cite[Ch. 8, 7.23]{FandT} we can express the discriminant of $K$, denoted $\Delta_K$, in terms of the discriminants of its quadratic subfields by $$\Delta_K =  \Delta_{k_1}\Delta_{k_2}\Delta_{k_3}.$$
Recall that \begin{equation}\label{quaddisc}\mathrm{disc}\left(\QQ(\sqrt{d}\,)\right) = \left\{
	\begin{array}{ll}
		d  & \mbox{if } d \equiv 1 \Mod 4 \\
		4d & \mbox{if } d \equiv 2 \text{ or } 3 \Mod 4.
	\end{array}
\right.\end{equation}
 We observe that it is not possible for just one of the integers $ma_1, mb_1$ and $a_1b_1$ to be congruent to 2 or 3 $\Mod 4$. For if $ma_1 \equiv 2 \Mod 4$ then either $m$ or $b_1 \equiv 2 \Mod 4$ and hence so is their product with $a_1$. Moreover $ma_1 \equiv 3 \Mod 4 \iff m \equiv -a_1 \Mod 4$ so either $m \equiv - b_1 \Mod 4$ or $a_1 \equiv - b_1 \Mod 4$.  Therefore \begin{equation}\label{disc}\Delta_K = c^2 m^2 a_1^2 b_1^2\end{equation} where $c$ is either 1 if all the $k_i$ are in the first case of \eqref{quaddisc}, 4 if exactly one $k_i$ is in the first case of \eqref{quaddisc} or $8$ if all the $k_i$ are in the second case.

We now turn our attention to how to identify Hasse norm principle failures and see that the congruence class of $(m, a_1, b_1) \Mod 4$ again plays a role.

\begin{lemma}\label{HNPcriteria}
Let $(m, a_1, b_1) \equiv (\epsilon_1, \epsilon_2, \epsilon_3) \Mod 4$. Then
\begin{enumerate}
\item When $\epsilon_1 = \epsilon_2 = \epsilon_3$, $K$ fails the Hasse norm principle if and only if all of the following hold:
\begin{enumerate}[(i)]
\item $ p \mid a_1 \implies \legendre{mb_1}{p} = +1$, \\
\item $ p \mid b_1 \implies \legendre{ma_1}{p} = +1$, \\
\item$ p \mid m \implies \legendre{a_1b_1}{p} = +1$.
\end{enumerate}
\item When $\epsilon_1 = \epsilon_2 \neq \epsilon_3$, $K$ fails the Hasse norm principle if and only if all of the following hold:
\begin{enumerate}[(i)]
\item $ p \mid a_1 \implies \legendre{mb_1}{p} = +1$, \\
\item $ p \mid b_1 \implies \legendre{ma_1}{p} = +1$, \\
\item$ p \mid m \implies \legendre{a_1b_1}{p} = +1$, \\
\item  $m \equiv a_1 \Mod 8$.
\end{enumerate}  Similarly for $\epsilon_2 = \epsilon_3 \neq \epsilon_1$ and $\epsilon_3 = \epsilon_1 \neq \epsilon_2$.
\item If $\epsilon_1, \epsilon_2$ and $\epsilon_3$ are pairwise distinct, then $K$ satisfies the Hasse norm principle.
\end{enumerate}
\end{lemma}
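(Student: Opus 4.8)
The plan is to deduce the lemma from the cohomological description of the Hasse norm principle obstruction for abelian extensions. Write $G=(\ZZ/2\ZZ)^2=\Gal(K/\QQ)$; since $G$ is abelian the decomposition group $G_v\le G$ at a place $v$ of $\QQ$ is well defined. By the work of Frei--Loughran--Newton \cite{FLN}, the obstruction group $(N_{K/\QQ}\A_K^*\cap\QQ^*)/N_{K/\QQ}K^*$ is isomorphic to the cokernel of the sum of corestriction maps
\[
\bigoplus_{v}\hat H^{-3}(G_v,\ZZ)\longrightarrow \hat H^{-3}(G,\ZZ).
\]
Here $\hat H^{-3}(H,\ZZ)$ is the Schur multiplier $H_2(H,\ZZ)$, which vanishes whenever $H$ is cyclic, while $\hat H^{-3}(G,\ZZ)\cong\ZZ/2\ZZ$ and corestriction from $G$ to itself is the identity. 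Hence the cokernel is nonzero --- equivalently, $K$ fails the Hasse norm principle --- if and only if $G_v\subsetneq G$ for \emph{every} place $v$ of $\QQ$. It therefore remains to decide, place by place, when the decomposition group is all of $G$, and to express the answer in terms of $(m,a_1,b_1)$.

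The archimedean place and the odd primes unramified in $K$ never cause a problem, since there $G_v$ is cyclic (generated respectively by complex conjugation and by a Frobenius element), hence proper. Suppose an odd prime $p$ ramifies in $K$. By \eqref{disc} together with the coprimality conditions \eqref{unique}, $p$ divides exactly one of $m,a_1,b_1$; say $p\mid a_1$. Then $p$ ramifies in $k_1$ and $k_3$ but not in $k_2=\QQ(\sqrt{mb_1})$, so $k_2$ is the largest subextension of $K/\QQ$ unramified at $p$; as $p$ is odd its inertia group is $\Gal(K/k_2)$, of order $2$, so $|G_p|\in\{2,4\}$ and $G_p=G$ exactly when $p$ has residue degree $2$ in $K$, i.e.\ when $p$ is inert in $k_2$, i.e.\ when $\legendre{mb_1}{p}=-1$. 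Thus ``$G_p\ne G$ for every odd $p\mid a_1$'' is precisely condition (i); the same argument with $k_1$ in place of $k_2$ for $p\mid b_1$, and with $k_3$ for $p\mid m$, gives (ii) and (iii). So (i)--(iii) together say exactly that no odd prime has full decomposition group.

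The remaining, and most delicate, case is $p=2$. Now $G_2=G$ if and only if $K\otimes_\QQ\QQ_2=\QQ_2(\sqrt{ma_1},\sqrt{mb_1})$ has degree $4$ over $\QQ_2$, equivalently if and only if none of $ma_1$, $mb_1$, $a_1b_1$ lies in $(\QQ_2^*)^2$; and as each of these three integers is squarefree, that means none of them is odd and $\equiv 1\Mod 8$. I would then go through the three cases. If $\epsilon_1,\epsilon_2,\epsilon_3$ are pairwise distinct, then by coprimality exactly one of $m,a_1,b_1$ is $\equiv 2\Mod 4$ and the other two are $\equiv 1$ and $\equiv 3\Mod 4$; two of the products $ma_1,mb_1,a_1b_1$ then have odd $2$-adic valuation and the third is $\equiv 3\Mod 4$, so none is a square in $\QQ_2$, whence $G_2=G$ and $K$ satisfies the Hasse norm principle --- this is (3). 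If $\epsilon_1=\epsilon_2=\epsilon_3$, then all of $m,a_1,b_1$ are $\equiv 1\Mod 4$ or all are $\equiv 3\Mod 4$; each then reduces mod $8$ to one of two values, so two of $m,a_1,b_1$ agree mod $8$ and their (odd, squarefree) product is $\equiv 1\Mod 8$, forcing $G_2\ne G$ unconditionally --- so only (i)--(iii) remain, which is (1). Finally, in case (2), say with $m\equiv a_1\not\equiv b_1\Mod 4$, a direct check of residues mod $8$ shows $mb_1$ and $a_1b_1$ are always $\equiv 3$ or $\equiv 7\Mod 8$ (or have odd $2$-adic valuation, when $b_1$ is even), hence are never squares in $\QQ_2$, whereas $ma_1\equiv 1\Mod 8$ precisely when $m\equiv a_1\Mod 8$; thus $G_2\ne G$ is equivalent to the extra congruence (iv), and the two other labellings $\epsilon_2=\epsilon_3\ne\epsilon_1$ and $\epsilon_3=\epsilon_1\ne\epsilon_2$ follow by permuting $(m,a_1,b_1)$. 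Putting together the conditions from all places gives the lemma.

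The one genuinely delicate step is the analysis at $p=2$: it relies on the description of $(\QQ_2^*)^2$ and on a careful (if short) bookkeeping of residues modulo $8$, which is precisely what explains why case (2) --- unlike cases (1) and (3) --- needs more than $(m,a_1,b_1)\Mod 4$. The rest --- invoking the Frei--Loughran--Newton criterion, noting that cyclic groups have trivial Schur multiplier, and the tame-ramification computation at odd primes --- is routine.
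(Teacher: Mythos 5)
Your proof is correct and follows essentially the same route as the paper: both reduce failure of the Hasse norm principle to the condition that every decomposition group is a proper (equivalently cyclic) subgroup of $(\ZZ/2\ZZ)^2$, handle the odd ramified primes by requiring them to split in the unique quadratic subfield they do not ramify in, and then sort out the prime $2$ by congruences modulo $8$. Your treatment of $p=2$ via squares in $\QQ_2^*$ is a little more systematic than the paper's ramification bookkeeping, but it is the same computation and reaches the same case division.
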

\begin{proof}
The Hasse norm principle fails in biquadratics if and only if all decomposition groups are cyclic (see e.g. \cite[Ch. 7, \S 11.4]{CandF}). Hence to come up with a criterion for Hasse norm principle failure we need to ensure that all decomposition groups are proper subgroups of the Galois group $(\mathbb{Z}/2\mathbb{Z})^2$. Therefore we need every rational prime to split in $K$.

A prime splits in $K$ if and only if it splits in at least one of the three quadratic subfields $\mathbb{Q}(\sqrt{ma_1})$, $\mathbb{Q}(\sqrt{mb_1})$ and $\mathbb{Q}(\sqrt{a_1b_1})$. 

\begin{enumerate}
\item If $a_1 \equiv m \equiv b_1$ mod 4 then the only primes that ramify in $\mathbb{Q}(\sqrt{ma_1})$ are those dividing $ma_1$, therefore since $m, a_1$ and $b_1$ are pairwise coprime, no prime ramifies in all three quadratic subextensions. A prime $p$ splits in $\mathbb{Q}(\sqrt{a}) \iff \legendre{a}{p} = +1$ otherwise it remains inert.
This means a prime cannot be inert in all three subfields. Hence we must ensure that all primes that ramify in two of the subfields split in the third.
\item If $m \equiv a_1 \not \equiv b_1 \Mod 4$ then $ma_1 \equiv 1 \Mod 4$, $mb_1 \equiv 2 \text{ or } 3 \Mod 4$ and $a_1b_1 \equiv 2 \text{ or } 3 \Mod 4$. Note that since $m$ and $a_1$ are coprime they cannot both be congruent to $2$, so they must be odd. We see that the rational prime 2 also ramifies in the subfields $\mathbb{Q}(\sqrt{mb_1})$ and  $\mathbb{Q}(\sqrt{a_1b_1})$. Therefore we must also ensure that 2 splits in $\mathbb{Q}(\sqrt{ma_1})$ so we must impose the extra condition $ma_1 \equiv 1$ mod 8.
\item In this case we have $ma_1 \equiv 2 \text { or }3 \Mod 4$, $mb_1 \equiv 2 \text{ or } 3 \Mod 4$ and $a_1b_1 \equiv 2 \text{ or } 3 \Mod 4$  so 2 ramifies in all 3 quadratic subextensions hence is totally ramified in $K$. Therefore the Hasse norm principle holds.
\end{enumerate}
\end{proof}

\section{Proof of Theorem \ref{Biquad}}\label{Counting1}
We move on to establishing the asymptotic formula for the number of biquadratic extensions $K=\mathbb{Q}(\sqrt{a}, \sqrt{b})$ of bounded discriminant. In Section \ref{Criteria} we saw that this means counting the number of integer triples which are square-free, pairwise coprime and whose product is bounded above in a way that depends on the congruence class of the tuple $\Mod 4$. In Section \ref{Criteria}, these triples were denoted $(m,a_1,b_1)$ however from here on out we will write them $(m_1,m_2,m_3)$ for simplicity.

 Let  $\bm{\delta}=(\delta_2, \delta_3)$ where $\delta_2$ denotes the sign of $m_2$ and $\delta_3$ the sign of $m_3$. We note that it would be redundant to keep track of all 3 signs and just 2 will suffice.
Observe that the highest power of 2 dividing $m_1m_2m_3$ is either 0 or 1.
To keep track of this we write $$m_1 = 2^{\mu}m_1', \, m_2 = \delta_2 2^{\alpha}m_2' \, \text{ and } m_3 = \delta_32^{\beta}m_3',$$ where $2 \nmid m_1'm_2'm_3'$. Finally we denote by $\bm{\epsilon}$ the congruence class of $(m_1', m_2', m_3')$ $\Mod 4$.
Then 
$$S(X) = \frac{1}{6}\sum_{\bm{\delta} \in \{\pm1\}^2}\sum \limits_{\substack{\mu + \alpha + \beta \in \{0,1\}\\ \mu,\alpha,\beta \in \{0,1\}}} \sum_{\bm{\epsilon} \in \{\pm1\}^3} T(\bm{\delta}, \bm{\epsilon}, \mu, \alpha, \beta),$$
where $T(\bm{\delta}, \bm{\epsilon}, \mu, \alpha, \beta)$ counts the number of tuples $(m_1', m_2', m_3') \in \NN^3$ such that the following all hold:
\begin{enumerate}[i)]
\item$\mu^2(m_1'm_2'm_3') =1$,\\
\item $(m_1',m_2',m_3') \equiv \bm{\epsilon} \Mod 4$, \\
\item $2^{\mu + \alpha + \beta}c_{\bm{\delta},\bm{\epsilon}, \mu, \alpha, \beta}m_1'm_2'm_3'\leq \sqrt{X}$.
\end{enumerate}
Note that $T(\bm{\delta},\bm{\epsilon}, \mu, \alpha, \beta)$ overcounts each triple $(m_1', m_2', m_3')$ by counting every permutation of the components so we divide the sum by 6 to compensate.
The constants $c_{\bm{\delta},\bm{\epsilon}, \mu, \alpha, \beta}$ correspond to the constant $c$ in \eqref{disc}. Recall that $c=1$ if all the components of $(m_1, m_2, m_3)$ are congruent $\Mod 4$, $c=4$ if two of the components are congruent, and 8 otherwise. We summarise the values below
$$\begin{array}{ll} c_{\bm{\delta},\bm{\epsilon},0,0,0} = \left\{
	\begin{array}{ll}
		1 & \mbox{if }\epsilon_1 =\delta_2\epsilon_2 =\delta_3\epsilon_3 ,\\
		4 & \mbox{otherwise};
	\end{array}
\right.  & c_{\bm{\delta},\bm{\epsilon},1,0,0} = \left\{
	\begin{array}{ll}
		4 & \mbox{if }\delta_2\epsilon_2 =\delta_3\epsilon_3 ,\\
		8 & \mbox{otherwise};
	\end{array}
\right. \\  c_{\bm{\delta},\bm{\epsilon},0,1,0} = \left\{
	\begin{array}{ll}
		4 & \mbox{if }\epsilon_1 =\delta_3\epsilon_3, \\
		8 & \mbox{otherwise};
	\end{array}
\right.  & c_{\bm{\delta},\bm{\epsilon},0,0,1} = \left\{
	\begin{array}{ll}
		4 & \mbox{if }\epsilon_1 =\delta_2\epsilon_2, \\
		8 & \mbox{otherwise}.
	\end{array}
\right. 
\end{array}$$
We'll first tackle the evaluation of $T(\bm{\delta},\bm{\epsilon}, \mu, \alpha, \beta)$, for fixed $\bm{\delta},\bm{\epsilon}, \alpha, \beta$ and $\mu$ then the remaining summation will be a simple computation.
\begin{lemma}\label{main1}
Let $c = c_{\bm{\delta},\bm{\epsilon},\mu, \alpha, \beta}$. Then we have
\begin{align*}T(\bm{\delta}, \bm{\epsilon}, \mu, \alpha, \beta)=  \frac{\sqrt{X}\log^2X}{160c2^{\mu + \alpha + \beta}} \prod_p \left( 1- \frac{1}{p}\right)^3\left(1+\frac{3}{p}\right)+O\left(\sqrt{X} \log X\right).\end{align*}
\end{lemma}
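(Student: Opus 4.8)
The plan is to evaluate $T=T(\bm\delta,\bm\epsilon,\mu,\alpha,\beta)$ directly. It counts the ordered triples of positive integers $(m_1',m_2',m_3')$ with $m_1'm_2'm_3'$ squarefree (so the $m_i'$ are individually squarefree and pairwise coprime), with $m_i'\equiv\epsilon_i\bmod{4}$ for each $i$ (which in particular forces every $m_i'$ to be odd), and with $m_1'm_2'm_3'\le Y$, where $Y:=\sqrt X/(c\,2^{\mu+\alpha+\beta})$. The first step is to detect the congruence conditions using the two Dirichlet characters modulo $4$: write $\chi_0$ for the principal one and $\psi$ for the non-principal one, and use $\mathbf 1_{m\equiv\epsilon_i\bmod{4}}=\tfrac12\sum_{\chi\bmod{4}}\overline{\chi}(\epsilon_i)\,\chi(m)$ for $m$ odd, so that
\[ T\;=\;\frac{1}{8}\sum_{\chi_1,\chi_2,\chi_3\bmod{4}}\overline{\chi_1}(\epsilon_1)\,\overline{\chi_2}(\epsilon_2)\,\overline{\chi_3}(\epsilon_3)\,U(\chi_1,\chi_2,\chi_3), \]
where $U(\chi_1,\chi_2,\chi_3):=\sum\chi_1(m_1')\chi_2(m_2')\chi_3(m_3')$ is summed over all triples of positive integers with squarefree product at most $Y$ (oddness is now automatic, since $\chi_0$ is principal modulo $4$).

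I would then collect the terms of each $U(\chi_1,\chi_2,\chi_3)$ according to the value $n=m_1'm_2'm_3'$, rewriting it as the one-variable sum $\sum_{n\le Y}a_{\bm\chi}(n)$, where $a_{\bm\chi}$ is the multiplicative function supported on odd squarefree integers with $a_{\bm\chi}(p)=\chi_1(p)+\chi_2(p)+\chi_3(p)$ on primes; its Dirichlet series factorises as
\[ \sum_{n\ge 1}\frac{a_{\bm\chi}(n)}{n^s}\;=\;\prod_{p\nmid 2}\Bigl(1+\frac{\chi_1(p)+\chi_2(p)+\chi_3(p)}{p^s}\Bigr)\;=\;L(s,\chi_1)\,L(s,\chi_2)\,L(s,\chi_3)\,H_{\bm\chi}(s), \]
where the $p^{-s}$-term of each local factor of $H_{\bm\chi}$ cancels, so that $H_{\bm\chi}(s)$ converges absolutely and is holomorphic on $\Re s>\tfrac12$. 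Applying Perron's formula together with the classical zero-free regions for $\zeta$ and for $L(\cdot,\psi)$, the only tuple whose integrand has a pole at $s=1$ is $(\chi_0,\chi_0,\chi_0)$, contributing a pole of order $3$ coming from $\zeta(s)^3$; each of the other seven tuples contains an entire factor $L(\cdot,\psi)$, hence has a pole of order at most $2$ and contributes $O(Y\log Y)$. For the principal tuple, $a_{(\chi_0,\chi_0,\chi_0)}(n)=\mu^2(n)\,3^{\omega(n)}\,\mathbf 1_{2\nmid n}$ has Dirichlet series $\zeta(s)^3 G(s)$ with $G(s)=(1-2^{-s})^3\prod_{p\nmid 2}(1+3p^{-s})(1-p^{-s})^3$, and the residue at $s=1$ yields $\tfrac12 G(1)\,Y\log^2 Y+O(Y\log Y)$.

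To conclude, pulling the $p=2$ factor out gives $G(1)=\tfrac18\prod_{p\nmid 2}(1+\tfrac3p)(1-\tfrac1p)^3=\tfrac25\prod_p(1+\tfrac3p)(1-\tfrac1p)^3$, and since $\epsilon_1,\epsilon_2,\epsilon_3$ are odd we have $\overline{\chi_0}(\epsilon_i)=1$, so $T=\tfrac18\cdot\tfrac12 G(1)\,Y\log^2 Y+O(Y\log Y)=\tfrac1{40}\,Y\log^2 Y\prod_p(1+\tfrac3p)(1-\tfrac1p)^3+O(Y\log Y)$. Substituting $Y=\sqrt X/(c\,2^{\mu+\alpha+\beta})$ and $\log Y=\tfrac12\log X+O(1)$ gives $Y\log^2 Y=\sqrt X\log^2 X/(4c\,2^{\mu+\alpha+\beta})+O(\sqrt X\log X)$, while $Y\log Y\ll\sqrt X\log X$; combining the two error terms into $O(\sqrt X\log X)$, the main term becomes $\frac{\sqrt X\log^2 X}{160\,c\,2^{\mu+\alpha+\beta}}\prod_p(1-\tfrac1p)^3(1+\tfrac3p)$, as claimed.

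The main obstacle will be the analytic bookkeeping around the factorisation above: shifting the Perron contour past the zero-free region and estimating the truncation error carefully enough that all seven non-principal tuples, together with the secondary terms in the residue of the principal tuple, really do contribute only $O(Y\log Y)$, and keeping accurate track of the numerical constants throughout. A minor wrinkle is that the pairwise-coprimality built into the definition of $U(\chi_1,\chi_2,\chi_3)$ prevents that quantity from having a one-variable Euler product — it is genuinely a three-variable one — which is exactly why one passes at the outset to the single variable $n=m_1'm_2'm_3'$. An entirely elementary alternative, replacing the contour integral by an iterated hyperbola estimate over the three variables, is also available, at the price of more laborious handling of the error terms.
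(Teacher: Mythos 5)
Your proposal is correct and follows essentially the same route as the paper: detect the congruence conditions with characters mod $4$, pass to the one-variable Dirichlet series of $a_{\bm\chi}(n)$, factor out the three $L$-functions, and observe that only the all-principal tuple produces the triple pole giving the $Y\log^2 Y$ main term, with all constants ($\tfrac18\cdot\tfrac12 G(1)=\tfrac1{40}$, then $\tfrac1{160}$ after $\log^2 Y=\tfrac14\log^2X+O(\log X)$) matching. The only cosmetic difference is that you extract the main term via Perron's formula and a contour shift, where the paper simply cites the Selberg--Delange theorem; since the relevant exponent of $\zeta$ is the integer $3$, either route (or the elementary hyperbola method you mention) works.
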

\begin{proof}
To ease notation, we will denote $N= \sqrt{X}/\left(c2^{\mu + \alpha + \beta}\right)$.
Then,
$$ T(\bm{\delta}, \bm{\epsilon}, \mu, \alpha, \beta)= \sum \limits_{\substack{m_1'm_2'm_3' \leq N\\ m_i' \equiv \epsilon_i \Mod 4 }} \mu^2(m_1'm_2'm_3').$$
We first remove the congruence condition by applying the character sum $$\bm{1}_{\{m_i' \equiv \epsilon_i \Mod 4\}} = \frac{1}{2} \sum_{\nu_i \in \{1,2\}} \chi^{\nu_i}(m_i'\epsilon_i),$$ where $\chi$ is the non-principal character mod 4. Then,
\begin{align*}
T(\bm{\delta}, \bm{\epsilon}, \mu, \alpha, \beta) &= \frac{1}{8} \sum_{m_1'm_2'm_3' \leq N} \mu^2(m_1'm_2'm_3') \sum_{\bm{\nu} \in \{1,2\}^3} \prod_{i \in \{1,2,3\}} \chi^{\nu_i}(m_i'\epsilon_i)\\
&= \frac{1}{8}  \sum_{\bm{\nu} \in \{1,2\}^3} \prod_{i} \chi^{\nu_i}(\epsilon_i) \!\!\! \sum \limits_{\substack{m_1'm_2'm_3'\leq N }}\!\! \mu^2(m_1'm_2'm_3')   \prod_{i} \chi^{\nu_i}(m_i').
\end{align*}

We investigate $$\sum_{n \leq N} a_n := \sum_{n \leq N}\mu^2(n) \sum \limits_{m_1m_2m_3 =n} \chi^{\nu_1}(m_1)\chi^{\nu_2}(m_2)\chi^{\nu_3}(m_3) $$ by looking at the associated Dirichlet series $$F(s) := \sum_n \frac{a_n}{n^s} .$$
Comparing this to the product of the $L$-functions associated to $\chi^{\nu_i}$ we introduce $$G(s) := F(s) \prod_{i=1}^3L(\chi^{\nu_i},s)^{-1},$$ where $L(\chi^2,s) = \left(1-\frac{1}{2^s}\right) \zeta(s).$ For example, when $\nu_i = 2$ for all $i$ then $$G(s) = \left( 1-\frac{1}{2^s}\right)^3\left(1+\frac{3}{2^s}\right)^{-1} \prod_p \left(1 + \frac{3}{p^s}\right)\left(1-\frac{1}{p^s}\right)^3.$$
For any $\nu_i$, the Euler product $G(s)$ is absolutely convergent for $\text{Re}(s)>\frac{1}{2}$ and in this region $G(s) \ll 1.$ The $L$-function associated to $\chi$ is entire and in the region $\frac{1}{2} < \sigma \leq 1$, we have $$L(\chi, \sigma+it) \ll_{\epsilon} \vert t \vert^{\frac{1-\sigma}{2}},$$ see for example \cite[5.20]{IwK}. Therefore the Dirichlet series $F(s)$ satisfies the conditions of the Selberg--Delange theorem \cite[II.5.2, Theorem 5.2]{Ten}. When $\nu_i=2$ for all $i$, the $L$-functions each correspond to a copy of $\zeta(s)$ and therefore the sum has order of magnitude $N \log^2 N$. In each other case, there are at most 2 copies of $\zeta(s)$ and hence the contribution is $O\left( N \log N\right).$
We conclude that $$\sum_{n \leq N} a_n = c N \log^2N + O(N\log N),$$ where $$c= \frac{(1-\frac{1}{2})^{-3}}{\Gamma(3)}G(1) = \frac{1}{5} \prod_p \left(1+\frac{3}{p}\right)\left(1-\frac{1}{p}\right)^3.$$

\end{proof}
All that's left is to apply the definition of $c_{\bm{\delta}, \bm{\epsilon},\mu,\alpha,\beta}$ in each case and find that
$$\sum_{\bm{\delta}\in \{\pm1\}^2}\sum_{\mu + \alpha + \beta \in \{0,1\}} \sum_{\bm{\epsilon} \in \{\pm 1\}^3} \frac{1}{c_{\bm{\delta}, \bm{\epsilon},\mu,\alpha,\beta}2^{\mu+\alpha+\beta}} = 23.$$

\section{Proof of Theorem \ref{BiquadHNP}}

Similarly to the previous section, we start by making the change of variables \begin{equation} \label{variables} m_1 = 2^{\mu}m_1', \, m_2 = \delta_2 2^{\alpha}m_2' \, \text{ and } m_3 = \delta_32^{\beta}m_3',\end{equation} where $\mu^2(2m_1'm_2'm_3') =1$ and $\mu,\beta,\alpha \in \ZZ_{\geq 0}$ such that $\mu+\alpha + \beta \leq 1.$ We saw in Section \ref{Criteria} that when counting Hasse norm principle failure it is important to keep track of the residue class of $(m_1', m_2', m_3') \Mod 8$ rather than just mod 4 as in Section \ref{Counting1}. Recall from Section \ref{Criteria} that if the congruence classes of $m_1, m_2$ and $m_3 \Mod 4$ are all distinct then $K$ always satisfies the Hasse norm principle. Moreover if exactly two of them are congruent mod 4 then we require that these two are in fact congruent mod 8 to ensure Hasse norm principle failure. Hence we restrict our attention to such classes which we denote $E(\mu, \alpha, \beta)$. Specifically
\begin{align*}
 E(\bm{0}) & = E_1(\bm{0}) \cup E_2(\bm{0}),
\end{align*}
where 
\begin{align*}
E_1(\bm{0}) &:= \{\bm{\epsilon} \in ((\ZZ/8\ZZ)^{\times})^3: \epsilon_1 \equiv \epsilon_2 \equiv  \epsilon_3 \Mod 4\}; \\
E_2(\bm{0}) &:= \bigcup \limits_{\substack{i,j,k \\ \text{pairwise distinct}}} \{ \bm{\epsilon} \in (\modstar{8})^3 : \epsilon_i = \epsilon_j \equiv - \epsilon_k \Mod 4\},
\end{align*} and 
\begin{align*}
E(1,0,0) &= \{\bm{\epsilon} \in ((\ZZ/8\ZZ)^{\times})^3 : \epsilon_2 = \epsilon_3\};\\
E(0,1,0) &= \{\bm{\epsilon} \in ((\ZZ/8\ZZ)^{\times})^3 : \epsilon_1 = \epsilon_3\};\\
E(0,0,1) &= \{\bm{\epsilon} \in ((\ZZ/8\ZZ)^{\times})^3 : \epsilon_1 = \epsilon_2\}.
\end{align*}

 Analogously to Section \ref{Counting1}, we define the constants $c_{\bm{\delta}, \bm{\epsilon}, \mu, \alpha, \beta}$ to account for the different discriminants in each case by setting $$c_{\bm{\delta},\bm{\epsilon}, \mu, \alpha, \beta} = \left\{
	\begin{array}{ll}
		1 &\text{ if }(\epsilon_1, \delta_2 \epsilon_2, \delta_3 \epsilon_3) \in E_1(\bm{0}) \text{ and } \mu =0 =\alpha = \beta\\
		4 & \text{ otherwise.}
	\end{array}
\right. $$Then 
\begin{equation}\label{simpleHNP}\widetilde{S}(X) = \frac{1}{6} \sum_{(\delta_2, \delta_3) \in \{\pm1\}^2} \sum \limits_{\substack{\mu+ \alpha + \beta \in \{0,1\}\\ \mu,\alpha,\beta \in \{0,1\}}} \sum_{(\epsilon_1, \delta_2 \epsilon_2,\delta_3 \epsilon_3) \in E(\mu, \alpha, \beta)}\widetilde{T}(\bm{\delta},\bm{\epsilon}, \mu, \alpha, \beta),\end{equation}
where $\widetilde{T}(\bm{\delta}, \bm{\epsilon}, \mu, \alpha, \beta)$ counts the number of tuples $(m_1', m_2', m_3') \in \mathbb{N}^3$ such that the following all hold:
\begin{enumerate}[i)]
\item $\mu^2(m_1'm_2'm_3') =1$,
\item $(m_1', m_2', m_3')\equiv \bm{\epsilon} \Mod 8$,
\item $2^{\mu+ \alpha + \beta}c_{\bm{\delta}, \bm{\epsilon}, \mu, \alpha, \beta}m_1'm_2'm_3' \leq \sqrt{X}$,
\item $(m_1', m_2', m_3')$ satisfies the local conditions for Hasse norm principle failure in Lemma \ref{HNPcriteria}.
\end{enumerate}

Observe that condition (iv) can be detected by the following indicator function
$$ \prod_{p\mid m_1'} \frac{1}{2} \left(1+\! \legendre{m_2m_3}{p}\!\right)\prod_{p\mid m_2'} \frac{1}{2} \left(1+\! \legendre{m_1m_3}{p}\!\right)\prod_{p\mid m_3'} \frac{1}{2} \left(1+\! \legendre{m_1m_2}{p}\!\right).$$
Writing $M= \frac{\sqrt{X}}{ 2^{\mu+\alpha+\beta}c_{\bm{\delta}, \bm{\epsilon}, \mu, \alpha, \beta}}$ this means that we may express $\widetilde{T}(\bm{\delta}, \bm{\epsilon}, \mu, \alpha, \beta)$ as
$$\sum \limits_{\substack{m_1'm_2'm_3'\leq M\\m_i' \equiv \epsilon_i \Mod 8}} \frac{\mu^2(m_1'm_2'm_3')}{\tau(m_1'm_2'm_3')} \!\!\prod_{p \mid m_1'm_2'm_3'}\!\!\! \left(\!\!1+ \!\legendre{m_1m_2}{p}\!\!\right)\!\!\left(\!\!1+ \!\legendre{m_2m_3}{p}\!\!\right)\!\!\left(\!\!1+\! \legendre{m_3m_1}{p}\!\!\right)\!\! .$$
By expanding out the product, we clearly have
$$\widetilde{T}(\bm{\delta}, \bm{\epsilon}, \mu, \alpha, \beta)  = \sum \limits_{\substack{m_1'm_2'm_3'\leq M\\m_i' \equiv \epsilon_i \Mod 8\\ m_i' = k_i \tilde{k}_i}}\!\! \frac{\mu^2(m_1'm_2'm_3')}{\tau(m_1'm_2'm_3')}  \legendre{m_1m_2}{k_3}\legendre{m_2m_3}{k_1}\legendre{m_3m_1}{k_2}.$$
Recalling \eqref{variables}, we see that $$\legendre{\! m_1m_2\!}{k_3}\!\legendre{\! m_2m_3\!}{k_1}\!\legendre{\! m_3m_1\!}{k_2}\! =\! \legendre{2^{\mu}}{k_2k_3}\!\legendre{2^{\alpha}\delta_2}{k_3k_1}\!\legendre{2^{\beta}\delta_3}{k_1k_2}\!\legendre{m_1'm_2'}{k_3}\!\legendre{m_2'm_3'}{k_1}\!\legendre{m_3'm_1'}{k_2}\!.$$
We may now repeatedly apply the law of quadratic reciprocity to conclude
\begin{equation}\label{finalT}
\widetilde{T}(\bm{\delta}, \bm{\epsilon}, \mu, \alpha, \beta) =\!\!\!\! \sum \limits_{\substack{m_1'm_2'm_3' \leq M\\m_i' \equiv \epsilon_i \Mod 8\\ m_i' = k_i \tilde{k}_i }} u(\k) \frac{\mu^2(m_1'm_2'm_3')}{\tau(m_1'm_2'm_3')} \legendre{\tilde{k}_1}{k_2k_3} \legendre{\tilde{k}_2}{k_3k_1} \legendre{\tilde{k}_3}{k_1k_2} ,
\end{equation}
where $ u(\k) = (-1)^{\nu(k_1)\nu(k_2) + \nu(k_2)\nu(k_3) + \nu(k_3) \nu(k_1)}\legendre{2^{\mu}}{k_2k_3}\legendre{2^{\alpha}\delta_2}{k_3k_1}\legendre{2^{\beta}\delta_3}{k_1k_2}.$ Here
  $\nu(b)$ is defined to be 0 if $b \equiv 1 \Mod 4$ and 1 otherwise, for any odd integer $b$.

These character sums are strongly reminiscent of the ones studied by Friedlander and Iwaniec in \cite{FrandIw}, and we will follow their approach to evaluate them, making use of the following results.
\begin{lemma}\label{FRIW1}
If $q=q_1q_2$ where $(q_1, q_2) =1$, $(ad,q)=1$ and $\chi_2$ is a non-principal character modulo $q_2$. Then for any $C>0$ we have
\begin{align*}\sum \limits_{\substack{n \leq x \\ (n,d) =1 \\ n \equiv a \Mod q_1}} \mu^2(n) \frac{\chi_2(n)}{\tau(n)} &\ll_C \tau(d) qx (\log x)^{-C}.\end{align*}
\end{lemma}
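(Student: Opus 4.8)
The goal is to bound a sum $\sum_{n\le x} \mu^2(n)\chi_2(n)/\tau(n)$ restricted to $n$ coprime to $d$ and lying in a fixed residue class mod $q_1$, where $\chi_2$ is a nonprincipal character mod $q_2$. The main obstacle is the presence of $1/\tau(n)$, which is not multiplicative in a way that fits directly into standard character-sum machinery; the trick, due to Friedlander--Iwaniec, is to write $1/\tau(n) = \sum_{n = k\tilde k} \mu^2(n)\, w(k,\tilde k)$ for a suitable weight, or more precisely to exploit that for squarefree $n$ one has $\tau(n) = 2^{\omega(n)}$, so $1/\tau(n)$ can be produced by a convolution identity.

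\begin{proof}[Proof proposal]
The plan is to reduce to a classical character-sum estimate by absorbing the $1/\tau$ factor via a convolution identity. For squarefree $n$ we have $\tau(n) = 2^{\omega(n)}$, and the Dirichlet series $\sum_n \mu^2(n)\chi_2(n)\tau(n)^{-1} n^{-s}$ has Euler product $\prod_{p\nmid q_2}(1 + \tfrac12 \chi_2(p) p^{-s})$; comparing this with $L(s,\chi_2)^{1/2}$ suggests writing $\mu^2(n)\tau(n)^{-1} = (\lambda * g)(n)$ for a coefficient sequence $\lambda$ whose Dirichlet series is $L(s,\chi_2)^{1/2}$-like and a rapidly decaying correction $g$. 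More concretely, and following \cite{FrandIw} directly, I would first use M\"obius inversion to drop the coprimality condition $(n,d)=1$ at the cost of a factor $\sum_{e\mid d}$, where now $n$ runs over multiples of $e$; the total loss is controlled by $\tau(d)$. Next, I would detect the congruence condition $n\equiv a\bmod q_1$ by expanding into Dirichlet characters modulo $q_1$, at a cost of $\varphi(q_1)$, so it suffices to bound $\sum_{n\le x}\mu^2(n)\psi(n)\chi_2(n)\tau(n)^{-1}$ for each $\psi \bmod q_1$; the combined character $\psi\chi_2$ is nonprincipal since $\chi_2$ is nonprincipal mod $q_2$ and $(q_1,q_2)=1$.

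The heart of the argument is then the estimate, for a nonprincipal character $\eta$ of conductor dividing $q$,
\[
\sum_{n\le x}\frac{\mu^2(n)\eta(n)}{\tau(n)} \ll q\, x(\log x)^{-C}.
\]
To get this I would split $n = k\ell$ with, say, $k\le x^{1/2}$ containing the "small" prime factors and $\ell$ the rest, use $\tau(n)^{-1} = \tau(k)^{-1}\tau(\ell)^{-1}$ on the squarefree $n$, and for the inner sum over $\ell$ in the long range invoke a P\'olya--Vinogradov or Burgess-type bound for $\sum_{\ell}\mu^2(\ell)\eta(\ell)$, or more efficiently, the standard fact that a nonprincipal $L$-function combined with $\tau^{-1}$-type weights yields cancellation with a power of $\log x$ saving coming from the Selberg--Delange / contour-shift bound in the zero-free region --- exactly the tool already cited as \cite[II.5.2]{Ten} in the previous section, applied to $L(s,\eta)^{1/2}$. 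The power $(\log x)^{-C}$ with arbitrary $C$ arises because each differentiation of $L(s,\eta)^{-1/2}$ only costs logarithms and we may push the contour to $\sigma = 1 - c/\log x$ any bounded number of times; alternatively one quotes \cite{FrandIw} verbatim for this clean statement.

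Finally I would reassemble: the $\sum_{e\mid d}$ contributes $\tau(d)$, the expansion into characters mod $q_1$ contributes a factor absorbed into the $q = q_1 q_2$ on the right, and the replacement of $x$ by $x/e$ only helps. The one point requiring care is uniformity in the moduli: the implied constant must not depend on $q_1$ or $q_2$ beyond the explicit factor $q$, which is why one wants a Siegel--Walfisz-quality input (valid since $C$ is fixed and we allow the constant to depend on $C$) rather than anything requiring $q$ small relative to $x$. I expect the main obstacle to be exactly this bookkeeping of uniformity together with correctly handling the $1/\tau$ weight in the long variable; once the problem is massaged into "nonprincipal character times $\tau^{-1}$, summed up to $x$", the cancellation is standard and the $(\log x)^{-C}$ saving is automatic.
\end{proof}
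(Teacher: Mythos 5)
Your strategy is fine, but note that the paper does not actually prove this lemma: its entire proof is the single line ``this is the error term in \cite[Corollary 2]{FrandIw}'', i.e.\ exactly the fallback you offer at the end of your sketch. What you outline is, in substance, how Friedlander--Iwaniec prove that corollary themselves: expand the congruence $n\equiv a\bmod{q_1}$ into characters $\psi$ modulo $q_1$ (each $\psi\chi_2$ being nonprincipal because $(q_1,q_2)=1$ and $\chi_2$ is nonprincipal), then apply a Selberg--Delange/contour argument to the Dirichlet series $\prod_{p}\bigl(1+\tfrac12\psi\chi_2(p)p^{-s}\bigr)$, which factors as $L(s,\psi\chi_2)^{1/2}$ times an Euler product absolutely convergent for $\mathrm{Re}(s)>\tfrac12$. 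Two points in your sketch should be sharpened if you were to write this out. First, the arbitrary-$C$ logarithmic saving does not come from ``differentiating $L^{-1/2}$''; it comes from the fact that for a nonprincipal character there is no $\zeta(s)$ factor at all, so in the Selberg--Delange expansion the exponent is $z=0$ and every coefficient vanishes (each carries a factor $1/\Gamma(-j)=0$), leaving only the $O(x(\log x)^{-N})$ remainder for every $N$. Second, unfolding $(n,d)=1$ by M\"obius reintroduces a coprimality condition between the two new variables; it is cleaner to keep $(n,d)=1$ inside the Euler product, where deleting the primes dividing $d$ perturbs the local factors by at most $\prod_{p\mid d}\bigl(1+\tfrac12\bigr)\leq\tau(d)$, which is the natural source of the $\tau(d)$ in the bound. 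Finally, you are right that uniformity in $q$ is the delicate point: for real $\psi\chi_2$ the square root $L(s,\psi\chi_2)^{1/2}$ has a branch point at a possible exceptional zero, and the statement survives only because the factor $q$ on the right makes the bound trivial unless $q\ll(\log x)^{C}$, a range covered (ineffectively) by Siegel's theorem. None of these are fatal gaps, but they are precisely the details that the one-line citation to \cite{FrandIw} spares the reader.
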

\begin{proof}
This is the error term in \cite[Corollary 2]{FrandIw}.
\end{proof}

\begin{lemma}\label{FRIW2'}
Let $\alpha_m$, $\beta_n$ be any complex numbers supported on odd integers with modulus bounded by one. Then
$$\mathop{\sum \sum} \limits_{\substack{m,n  > V\\ mn \leq X}} \alpha_m \beta_n \legendre{m}{n} \ll XV^{-\frac{1}{6}}  (\log X)^{\frac{7}{6}}.$$
\end{lemma}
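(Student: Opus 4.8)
The plan is to prove this bilinear estimate for the Jacobi symbol by the standard "duality plus large sieve'' strategy that Friedlander and Iwaniec employ, combined with a reciprocity twist to exploit the oscillation of $\legendre{m}{n}$. First I would dispose of the diagonal-type degeneracies: since $m,n$ are both odd and both exceed $V$, and $mn\le X$, the ranges of summation are genuinely bilinear with $V < m,n < X/V$, so there is no trivial term to peel off. The main work is to bound the bilinear form
\[
B = \mathop{\sum\sum}\limits_{\substack{m,n>V\\ mn\le X}}\alpha_m\beta_n\legendre{m}{n}.
\]

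The core step is a Cauchy--Schwarz in one variable followed by the multiplicative large sieve inequality. Fixing a dyadic range $m\sim M$, $n\sim N$ with $MN\asymp X$ and $M,N>V$, I would apply Cauchy--Schwarz in $n$ (say), bounding $\sum_{n\sim N}|\beta_n|^2\le N$, and then expand the square in $m$ to obtain
\[
B_{M,N}^2 \ll N \mathop{\sum\sum}\limits_{m_1,m_2\sim M}\alpha_{m_1}\overline{\alpha_{m_2}}\sum_{n\sim N}\legendre{m_1m_2}{n}.
\]
For $m_1m_2$ not a perfect square the inner sum over $n$ is a character sum to modulus $m_1m_2$ (after using quadratic reciprocity to flip $\legendre{m_1m_2}{n}$ into a Jacobi symbol modulo $m_1m_2$ in $n$, at the cost of harmless factors depending on residues mod $4$ and mod $8$, which is why the hypothesis that $\alpha_m,\beta_n$ are supported on odd integers is needed), so by the Polya--Vinogradov inequality it is $\ll (m_1m_2)^{1/2}\log(m_1m_2)\ll M\log X$; the square terms $m_1=m_2$ (and more generally $m_1m_2=\square$) contribute $\ll M\cdot N$. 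Balancing these two contributions — roughly $N(M^2\cdot M\log X + M\cdot N)$ — and optimising over the dyadic decomposition, using $M,N>V$ to control the Polya--Vinogradov term, yields a saving that is a fixed power of $V$; the exponent $\tfrac16$ and the $(\log X)^{7/6}$ come out of this optimisation, exactly as in \cite[Lemma~2 or the bilinear estimates of §2]{FrandIw}.

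The main obstacle — and the reason the exponent is only $\tfrac16$ rather than $\tfrac12$ — is that a single application of Cauchy--Schwarz plus Polya--Vinogradov is lossy when one of $M,N$ is much larger than the other; to recover a power saving uniformly across the whole hyperbolic region $mn\le X$ one must instead iterate, or interpolate between the "reflected'' estimates obtained by Cauchy--Schwarz in $m$ versus in $n$, keeping careful track of which variable carries the large-sieve gain. Concretely I would split according to whether $N\le X^{1/2}$ or $M\le X^{1/2}$, apply Cauchy--Schwarz so as to smooth the longer variable, and then feed the resulting incomplete-character sums into the large sieve (or equivalently Heath-Brown's quadratic large sieve) rather than Polya--Vinogradov alone; optimising the cut-off between the two regimes against the constraint $M,N>V$ produces the stated bound. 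The bookkeeping of the quadratic-reciprocity factors $\legendre{2^{\ast}}{\cdot}$ and the signs $(-1)^{\nu(\cdot)\nu(\cdot)}$ is routine given the odd-support hypothesis and does not affect the exponent, so I would state it as such and refer to \cite{FrandIw} for the details of the large-sieve input.
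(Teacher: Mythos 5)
Your top-level strategy --- dyadic decomposition in $m$ and $n$ followed by a bilinear estimate for $\legendre{m}{n}$ in each box --- is exactly the paper's: the paper simply invokes \cite[Lemma 2]{FrandIw}, which gives $\sum_{m\sim M}\sum_{n\sim N}\alpha_m\beta_n\legendre{m}{n}\ll MN\bigl(M^{-1/6}+N^{-1/6}\bigr)(\log MN)^{7/6}$, and sums over boxes. Since you too ultimately defer to \cite{FrandIw} for ``the details of the large-sieve input,'' you land in the same place. The problem is that the derivation you sketch of the box-by-box bound does not work as stated. Cauchy--Schwarz in $n$ plus P\'olya--Vinogradov gives $B_{M,N}^2\ll N^2M\log M+NM^3\log X$ (the diagonal $m_1m_2=\square$ contributes about $M\log M$ pairs, not $M$), hence $B_{M,N}\ll MN\bigl((\log M/M)^{1/2}+(M\log X/N)^{1/2}\bigr)$; the second term is worse than trivial unless $M\ll N/\log X$. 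So the method is lossy precisely when $M\asymp N$ --- you assert the opposite, that it fails when one variable is much larger than the other --- and your proposed fix (split at $X^{1/2}$ and smooth the longer variable) does nothing for the central range $M\asymp N\asymp X^{1/2}$, which is the whole difficulty. Treating that range needs either Heath--Brown's quadratic large sieve (which requires reducing to squarefree moduli via $m=ab^2$, since your coefficients are arbitrary, and is not ``equivalent'' to the multiplicative large sieve) or the elementary double-oscillation argument that actually produces the exponent $\tfrac16$; your optimisation does not derive $\tfrac16$ and $(\log X)^{7/6}$, it imports them from \cite{FrandIw}.

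The second gap is that the one computation the paper's proof actually consists of --- summing the dyadic boxes using $m,n>V$ --- is asserted rather than performed. You need the estimate
\begin{equation*}
(\log X)^{7/6}\mathop{\sum\sum}\limits_{\substack{2^i,2^j>V\\ 2^{i+j}\le X}}2^{i+j}\,2^{-j/6}\ll(\log X)^{7/6}\sum_{V\le 2^i\le X/V}2^{i}\left(\frac{X}{2^{i}}\right)^{5/6}\ll XV^{-1/6}(\log X)^{7/6},
\end{equation*}
since it is here, not in the bilinear bound itself, that the hypothesis $m,n>V$ and the final shape of the estimate enter. With \cite[Lemma 2]{FrandIw} quoted as a black box and this summation carried out, the proof is complete; as written, the portion of your argument that purports to be self-contained would not survive being made precise.
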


\begin{proof}
We break the range of summation into dyadic intervals and then apply \cite[Lemma 2]{FrandIw} in each interval. Thus
\begin{align*}
\left| \mathop{\sum\sum}\limits_{\substack{m,n  > V\\ mn \leq X}} \alpha_m \beta_n \legendre{m}{n} \right| &\leq \mathop{\sum \sum} \limits_{\substack{2^i, 2^j > V \\ 2^{i+j} \leq X}} \left| \sum_{2^i < m \leq 2^{i+1}} \sum_{2^j < n \leq 2^{j+1}}  \alpha_m \beta_n \legendre{m}{n} \right| \\
&\ll \mathop{\sum\sum}\limits_{\substack{2^i, 2^j > V \\ 2^{i+j} \leq X}}  2^{i+j}(2^{-\frac{i}{6}} + 2^{-\frac{j}{6}} )  (i+j)^{\frac{7}{6}}\\
&\ll (\log X)^{\frac{7}{6}} \mathop{\sum\sum}\limits_{\substack{2^i, 2^j > V \\ 2^{i+j} \leq X}} 2^{i + \frac{5j}{6}}\\
&=  (\log X)^{\frac{7}{6}} \sum_{V \leq 2^i \leq X/V} 2^i \sum_{V \leq 2^j \leq X/2^i}2^{\frac{5j}{6}} \ll XV^{-\frac{1}{6}} (\log X)^{\frac{7}{6}} .
\end{align*}
\end{proof}

The estimate in Lemma \ref{FRIW1} 
 is only useful when the modulus of the character in our sum is smaller than a power of $\log X$. Set $$V=(\log X)^B,$$ for a large constant parameter $B$ at our disposal. The moduli of the characters involved in \eqref{finalT} can either be thought of as being the $k_i$ or the $\tilde{k}_i$. We get a main term contribution from Lemma~\ref{FRIW1} in the ranges where each of the three characters has small modulus, hence there are three possibilities:
\begin{enumerate}[(i)]
\item$ k_i \leq V \text{ for all } i.$
\item $\tilde{k}_i \leq V \text{ for all } i.$
\item $\tilde{k}_i, \tilde{k}_j, k_i, k_j \leq V \text{ for some choice of } i \neq j.$
\end{enumerate}

In every other case, we have $k_i, \tilde{k}_j > V$ for some $i \neq j$ and we can apply Lemma~\ref{FRIW2'} to exploit cancellation in the $\legendre{\tilde{k}_i}{k_j}$ term and get a negligible contribution. To illustrate this, suppose that $k_2, \tilde{k}_1 > V$. Then we can apply Lemma \ref{FRIW2'} to the factor $\legendre{\tilde{k}_1}{k_2}$ in \eqref{finalT}. Let \begin{align*}
f_1(\tilde{k}_1) &= \mathbf{1}_{\left\{ \substack{\tilde{k}_1 \equiv \epsilon_1 k_1 \Mod 8\\ (\tilde{k}_1,2k_1\tilde{k}_2k_3\tilde{k}_3)=1}\right\}}\frac{\mu^2(\tilde{k}_1)}{\tau(\tilde{k}_1)} \legendre{\tilde{k}_1}{k_3}\\
f_2(k_2) &= \mathbf{1}_{\left\{ \substack{k_2 \equiv \epsilon_2 \tilde{k}_2 \Mod 8\\ (k_2,2k_1\tilde{k}_2k_3\tilde{k}_3)=1}\right\}}\frac{\mu^2(k_2)}{\tau(k_2)} \legendre{\tilde{k}_3}{k_2}.
\end{align*} Then the $k_2$, $\tilde{k}_1$ sum is given by
\begin{align*}
& \sum \limits_{\substack{k_2, \tilde{k}_1 \geq V \\ k_2\tilde{k}_1 \leq \frac{\sqrt{X}}{k_1\tilde{k}_3k_3\tilde{k}_2}\\(\tilde{k}_1,k_2)=1}}f_1(\tilde{k}_1)f_2(k_2)u(\k) \legendre{\tilde{k}_1}{k_2}\\
&=  \sum \limits_{\substack{d \leq \frac{X^{\frac{1}{4}}}{\sqrt{k_1\tilde{k}_3k_3\tilde{k}_2}}\\(d,2k_1\tilde{k}_2k_3\tilde{k}_3)=1}}\!\!\!\mu(d)\!\!\!\sum \limits_{\substack{k_2, \tilde{k}_1 \geq V/d \\ k_2\tilde{k}_1 \leq \frac{\sqrt{X}}{d^2k_1\tilde{k}_3k_3\tilde{k}_2}}}f_1(\tilde{k}_1)\legendre{\tilde{k}_1}{d}f_2(k_2)\legendre{d}{k_2}u(k_1,dk_2,k_3) \legendre{\tilde{k}_1}{k_2}\\
 &\ll \sqrt{X}V^{-\frac{1}{6}}  ( \log X)^{\frac{7}{6}}  \frac{\tau(\tilde{k}_2k_1k_3 \tilde{k}_3)}{\tilde{k}_2k_1k_3 \tilde{k}_3}.
\end{align*}
When summed trivially over the remaining variables, this leads to a contribution of $$\ll \sqrt{X}V^{-\frac{1}{6}}  ( \log X)^{\frac{31}{6}}.$$
Of course, this bound applies for all ranges $k_i, \tilde{k}_j >V$ where $i \neq j$.\\

From here on out then we will restrict to the three different ranges mentioned above.
First, consider the range $\tilde{k}_i \leq V$ for $i=1,2,3$ and fix $k_1, \tilde{k}_1, \tilde{k}_2$ and $\tilde{k}_3$. In order to evaluate $\widetilde{T}(\bm{\delta},\bm{\epsilon}, \mu, \alpha, \beta)$ we need to look at 
$$ T_{k_1, \tilde{k}_1, \tilde{k}_2, \tilde{k}_3} = \mathop{\sum \sum}  \limits_{\substack{k_2k_3 \leq M/m_1'\tilde{k}_2\tilde{k}_3\\ k_i \equiv \epsilon_i \tilde{k}_i \Mod 8 \text{ for } i=2,3 \\ (k_2k_3,2m_1'\tilde{k}_2\tilde{k}_3) =1}} \frac{\mu^2(k_2k_3)}{\tau(k_2k_3)} \legendre{\tilde{k}_3}{k_2}\legendre{\tilde{k}_2}{k_3}\legendre{\tilde{k}_1}{k_2k_3}u(\k),$$
then compute the sum \begin{equation}\label{firstrange} \mathop{\sum \sum \sum} \limits_{\substack{k_1\tilde{k}_1\tilde{k}_2\tilde{k}_3\leq M\\\tilde{k}_i \leq V \\ m_1' \equiv \epsilon_1 \Mod 8}}\frac{\mu^2(2m_1'\tilde{k}_2\tilde{k}_3)}{\tau(m_1'\tilde{k}_2\tilde{k}_3)} \legendre{\tilde{k}_2\tilde{k}_3}{k_1}T_{k_1, \tilde{k}_1, \tilde{k}_2, \tilde{k}_3}.\end{equation}
Note here that we can pull $u(\k)$ out of the expression for $T_{k_1, \tilde{k}_1, \tilde{k}_2, \tilde{k}_3}$.
Indeed, $\nu(x)$ only depends on the residue class of $x$ mod $4$, and $\legendre{2}{x}$ is determined by the residue class of $x$ mod $8$. Therefore
$$u(\k) = u(\epsilon_1\tilde{k}_1, \epsilon_2 \tilde{k}_2, \epsilon_3 \tilde{k}_3).$$
Analogously in the range  $k_i \leq V$ for $i=1,2,3$, we need to evaluate  \begin{equation}\label{secondrange} \mathop{\sum \sum \sum} \limits_{\substack{\tilde{k}_1k_1k_2k_3\leq M\\ k_i \leq V \\ m_1' \equiv \epsilon_1 \Mod 8}}\frac{\mu^2(2m_1'k_2k_3)}{\tau(m_1'k_2k_3)} \legendre{\tilde{k}_1}{k_2k_3}T'_{k_1, \tilde{k}_1, k_2, k_3},\end{equation}
where $$T'_{\tilde{k}_1,k_1, k_2, k_3} =\mathop{\sum \sum} \limits_{\substack{\tilde{k}_2 \tilde{k}_3 \leq M/m_1'k_2k_3\\ k_i \equiv \epsilon_i \tilde{k}_i \Mod 8 \text{ for } i=2,3 \\ (\tilde{k_2}\tilde{k}_3,2m_1'k_2k_3) =1}} \frac{\mu^2(\tilde{k}_2\tilde{k}_3)}{\tau(\tilde{k}_2\tilde{k}_3)} \legendre{\tilde{k}_3}{k_2}\legendre{\tilde{k}_2}{k_3}\legendre{\tilde{k}_2\tilde{k}_3}{k_1}u(\k) .$$

Our approach will be to use Lemma \ref{FRIW1} to handle the contribution to the sums $T_{k_1, \tilde{k}_1, \tilde{k}_2, \tilde{k}_3}$ and $T'_{\tilde{k}_1,k_1, k_2, k_3}$ arising from non-principal characters, then compute the main terms where each character is principal. In the latter range, the reciprocity factor in the main term is $u(1,1,1)=1$. However in the former, we have the term $u(\bm{\epsilon})$ which when summed over all possible $\bm{\epsilon}$ will lead to cancellation (c.f. Lemma \ref{ucsum}).

Finally, we consider the third type of range. Suppose $k_2, k_3, \tilde{k}_2, \tilde{k}_3 \leq V$ then we will fix all four of these variables and evaluate
$$T''_{k_2, \tilde{k}_2, k_3, \tilde{k}_3} = \mathop{\sum \sum} \limits_{\substack{k_1\tilde{k}_1 \leq M/m_2'm_3'\\ k_1 \equiv \epsilon_1 \tilde{k}_1 \Mod 8 \\ (k_1\tilde{k}_1,2m_2'm_3')=1}}u(\k) \frac{\mu^2(k_1\tilde{k}_1)}{\tau(k_1\tilde{k}_1)} \legendre{\tilde{k}_2\tilde{k}_3}{k_1} \legendre{\tilde{k}_1}{k_2k_3},$$ then compute
$$\sum \limits_{\substack{k_2\tilde{k}_2k_3\tilde{k}_3 \leq \min \{M, V\}\\ k_i \equiv \epsilon_i \tilde{k}_i \text{ for } i = 2,3}} \frac{\mu^2(k_2\tilde{k}_2k_3\tilde{k}_3)}{\tau(k_2\tilde{k}_2k_3\tilde{k}_3)}\legendre{\tilde{k}_2}{k_3}\legendre{\tilde{k}_3}{k_2}T''_{k_2, \tilde{k}_2, k_3, \tilde{k}_3}.$$

Before attempting to evaluate $T''_{k_2, \tilde{k}_2, k_3, \tilde{k}_3}$, we wish to remove the $u(\k)$ term as in the other ranges. However this is marginally more complicated than in the other cases. For fixed $k_2$ and $k_3$, we may think of $u(\k)$ as a character on $k_1$. Our main term contribution will occur when this character is principal, in particular this requires $k_2=k_3=1$. In that case $$u(k_1, 1, 1) = \legendre{\delta_2\delta_3 2^{\alpha + \beta}}{k_1}.$$ Hence one will only get a main term contribution when $k_2 = k_3 =\tilde{k}_2 = \tilde{k}_3 =1$ and $\delta_2\delta_3 2^{\alpha + \beta} =1$. This gives a further constraint on the outer sum over $\bm{\delta}, \mu, \alpha$ and $\beta$. The following lemma deals with the first two types of ranges.

\begin{lemma}\label{Tsum}
If $\tilde{k}_i \leq V$ for $i=1,2,3$ then for any $C>0$, we have 
\begin{align*}
T_{k_1, \tilde{k}_1, \tilde{k}_2, \tilde{k}_3} = \delta_{\tilde{k}_1=\tilde{k}_2=\tilde{k}_3=1}\frac{  u(\bm{\epsilon} )M}{4k_1\pi^2}& \prod_{p \mid k_1}\!\left(1+\frac{1}{p}\right)^{-1}\!\left\{ 1+ O\left(\tau(k_1)\sqrt{\frac{k_1}{M}}\right)\right\}\!\\
&+O_C(MV^2\log V(\log M)^{2-C}).\end{align*} Analogously, if $k_i \leq V$ for $i=1,2,3$ then for any $C>0$, 
\begin{align*}
T'_{\tilde{k}_1,k_1, k_2, k_3} = \delta_{k_1=k_2=k_3=1}\frac{  u(1,1,1 )M}{4\tilde{k}_1\pi^2} &\prod_{p \mid \tilde{k}_1}\!\left(1+\frac{1}{p}\right)^{-1}\!\left\{ 1+ O\left(\tau(\tilde{k}_1)\sqrt{\frac{\tilde{k}_1}{M}}\right)\!\!\right\}\!\\
&+O_C(\sqrt{M}V^2\log V(\log M)^{-C}).\end{align*}

\end{lemma}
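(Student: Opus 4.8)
The plan is to evaluate $T_{k_1, \tilde{k}_1, \tilde{k}_2, \tilde{k}_3}$ by splitting the Jacobi symbol $\legendre{\tilde{k}_1}{k_2k_3}$ into its character behaviour modulo $\tilde{k}_1$; since $\tilde{k}_1 \leq V = (\log X)^B$ is small, this is the regime where Lemma \ref{FRIW1} applies. First I would pull the reciprocity factor out: as the excerpt notes, $u(\k) = u(\epsilon_1\tilde{k}_1,\epsilon_2\tilde{k}_2,\epsilon_3\tilde{k}_3)$ depends only on the fixed small variables, so it is a constant $u(\bm\epsilon)$ for the inner $k_2,k_3$ sum (after using $k_i \equiv \epsilon_i\tilde{k}_i \bmod 8$). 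The remaining sum is over squarefree, pairwise-coprime $k_2,k_3$ with $k_2k_3 \leq M/(m_1'\tilde k_2\tilde k_3)$, weighted by $\mu^2(k_2k_3)/\tau(k_2k_3)$ and by the three Jacobi symbols $\legendre{\tilde k_3}{k_2}$, $\legendre{\tilde k_2}{k_3}$, $\legendre{\tilde k_1}{k_2k_3}$, subject to congruences modulo $8$ and coprimality to $2m_1'\tilde k_2\tilde k_3$. Writing $n = k_2k_3$ (and recording the factorisation implicitly, as each squarefree $n$ coprime to everything contributes $\mu^2(n)/\tau(n)$ summed over its $k_2k_3$ splittings, which I would keep as a Dirichlet-convolution bookkeeping), each of these Jacobi symbols is a character of conductor dividing $8\tilde k_1\tilde k_2\tilde k_3$ in the variable $n$. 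This character is principal precisely when $\tilde k_1 = \tilde k_2 = \tilde k_3 = 1$ (using that $\legendre{\cdot}{n}$ for a non-square argument is genuinely non-principal — here I would cite the same reasoning as in \cite{FrandIw} and appeal to the fact that squarefree $\tilde k_i$ that are not $1$ give a non-trivial symbol on a positive-density set).

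When $\tilde k_1 = \tilde k_2 = \tilde k_3 = 1$, all the symbols disappear and we are left with the main term: $\sum_{n \leq M/m_1',\ (n,2m_1')=1,\ n\equiv \epsilon_2\epsilon_3 \bmod 8} \mu^2(n)\cdot(\text{number of coprime ordered factorisations})/\tau(n)$. The factorisation count of a squarefree $n$ into two coprime parts is $2^{\omega(n)} = \tau(n)$, so the weight $\mu^2(n)/\tau(n)$ times $\tau(n)$ collapses to $\mu^2(n)$ restricted to the right congruence and coprimality class — this is where the clean factor $\tfrac{1}{4}$ (from the two characters mod $8$ detecting the residue class) and the local factors $\prod_{p\mid k_1}(1+1/p)^{-1}$ and $1/(\pi^2 k_1)$ (the density of squarefree integers coprime to $2k_1$, namely $\tfrac{6}{\pi^2}\prod_{p\mid 2k_1}(1+1/p)^{-1} \cdot \tfrac12$, or rather the $2$-adic and $k_1$-adic corrections to $6/\pi^2$) come from. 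I would make this precise by a standard squarefree-counting estimate (a hyperbola-type argument on $\sum_{n\leq Y}\mu^2(n)$ in an arithmetic progression) giving the secondary error $O(\tau(k_1)\sqrt{k_1/M})$ from the square-root-of-main-term tail in the squarefree sieve. When some $\tilde k_i \neq 1$, at least one symbol is non-principal with conductor $\leq 8V^3$, so Lemma \ref{FRIW1} with that character (taking $q_1$ the modulus-$8$ part, $q_2$ the non-principal conductor, $d$ absorbing the coprimality constraint) gives a bound $\ll_C \tau(d)\,q\,(M/m_1')(\log M)^{-C}$; summing trivially over the $O(V^2)$ choices of $\tilde k_2,\tilde k_3$ and over $k_1,\tilde k_1$ with the divisor-sum $\sum \tau(d)/\cdots \ll \log V$ worth of loss yields the stated $O_C(MV^2 \log V(\log M)^{2-C})$, the two extra logs being the harmless price of the $1/\tau$ normalisation and the $m_1'$ sum. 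The case of $T'$ is identical with the roles of $k$ and $\tilde k$ swapped; the only cosmetic difference is that the variable $\tilde k_1$ now ranges up to $\sqrt{X}$-ish rather than being bounded by $V$, which is why the main-term range is $n = \tilde k_2 \tilde k_3 \leq M/(m_1' k_2 k_3)$ and the reciprocity factor evaluates to $u(1,1,1) = 1$, and why the error saving is only $(\log M)^{-C}$ (no spare log).

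The main obstacle is bookkeeping rather than analytic: correctly separating which of the three Jacobi symbols becomes the "useful" non-principal character in each sub-case, tracking the conductor so that Lemma \ref{FRIW1} applies with the modulus $q$ small enough that $q(\log M)^{-C}$ is genuinely a saving (this forces $q \leq V^3 = (\log X)^{3B}$, fine for $C$ large), and ensuring the congruences modulo $8$ are compatible with the modulus in Lemma \ref{FRIW1} — i.e. that $q_1 = 8$ and $\gcd(q_1,q_2)=1$, which holds since all the $\tilde k_i$ are odd. A minor subtlety is handling the coprimality conditions $(k_2k_3, 2m_1'\tilde k_2\tilde k_3)=1$ by Möbius inversion over the common divisor $d$, exactly as in the displayed computation preceding the lemma; this is what produces the $\tau(d)$ and the extra $\log V$. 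Everything else is a routine squarefree-sieve computation that I would not grind through.
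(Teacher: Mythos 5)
Your plan is essentially the paper's proof: detect the mod-$8$ congruences on $k_2,k_3$ by character sums, pull out $u(\k)=u(\bm{\epsilon}\tilde{\k})$ using those congruences, observe that the combined characters are principal only when $\tilde{k}_1=\tilde{k}_2=\tilde{k}_3=1$ (whence $\mu^2(n)/\tau(n)$ times the $\tau(n)$ ordered coprime factorisations collapses to $\mu^2(n)$, evaluated by the standard squarefree count with error $O(\tau(d)\sqrt{X})$), and bound the non-principal contributions via Lemma \ref{FRIW1}. The only caveat is a bookkeeping slip in the prose (the two mod-$8$ character sums contribute $1/16$, not $1/4$; the $1/(4\pi^2)$ arises as $\tfrac{1}{16}\cdot\tfrac{6}{\pi^2}\cdot\tfrac{2}{3}$), which does not affect the validity of the approach.
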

Here by $\delta_{\tilde{k}_1=\tilde{k}_2=\tilde{k}_3=1}$ we simply mean the function that is 1 when all $\tilde{k}_i = 0$ and 0 otherwise. Hence, as expected, our main term corresponds to the case when all characters are principal. The final set of ranges is dealt with by the next lemma.

\begin{lemma}\label{weird}
If $k_2, \tilde{k}_2, k_3, \tilde{k}_3 \leq V$ then for any $C>0$, we have \begin{align*}T''_{k_2, \tilde{k}_2, k_3, \tilde{k}_3} = \delta_{k_2 = \tilde{k}_2 = k_3 = \tilde{k}_3= \delta_2\delta_32^{\alpha+\beta} = 1} \frac{M}{\pi^2}&\left\{ 1+ O\left(\frac{1}{\sqrt{M}}\right)\!\!\right\} \\ &+O_C(\sqrt{M}V^2(\log V)^2(\log M)^{-C}).\end{align*} The ranges $k_1, \tilde{k}_1, k_2, \tilde{k}_2 \leq V$ and $k_1, \tilde{k}_1, k_3, \tilde{k}_3 \leq V$ are analagous.
\end{lemma}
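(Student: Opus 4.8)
The plan is to follow the strategy used for Lemma \ref{Tsum}, extracting a main term when every character in play is principal and controlling the rest with Friedlander--Iwaniec.

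\textbf{Reducing the $u$-factor and identifying the main case.} Since $k_1\equiv\epsilon_1\tilde k_1\pmod 8$ forces $m_1'=k_1\tilde k_1\equiv\epsilon_1\pmod 8$ (odd squares being $\equiv 1\pmod 8$), and since $\nu(\cdot)$ and the symbols $\legendre{2^\bullet}{\cdot}$ only see residues modulo $8$, one may replace $u(\k)$ by $u(\epsilon_1\tilde k_1,k_2,k_3)$; for fixed $k_2,k_3$ this is a constant times a real character of conductor dividing $8$ in the variable $\tilde k_1$, and when $k_2=k_3=1$ it degenerates to $\legendre{\delta_2\delta_3 2^{\alpha+\beta}}{k_1}$, which is principal exactly when $\delta_2\delta_3 2^{\alpha+\beta}=1$. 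Thus $T''$ is a double sum over $k_1,\tilde k_1$ with $k_1\tilde k_1$ squarefree, coprime to $2m_2'm_3'$ and in a fixed class modulo $8$, weighted by $\mu^2(k_1\tilde k_1)/\tau(k_1\tilde k_1)$ against the product of $\legendre{\tilde k_2\tilde k_3}{k_1}$ (a character in $k_1$, principal iff $\tilde k_2=\tilde k_3=1$), $\legendre{\tilde k_1}{k_2k_3}$ (a character in $\tilde k_1$, principal iff $k_2=k_3=1$), and the mod-$8$ part of $u$; each has conductor at most $8V^2$.

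\textbf{The main term.} When $k_2=\tilde k_2=k_3=\tilde k_3=\delta_2\delta_3 2^{\alpha+\beta}=1$ all of these characters are principal, the $\tau(k_1\tilde k_1)$ in the denominator cancels against the $\tau(m_1')$ ordered factorizations $k_1\tilde k_1=m_1'$, and $T''$ collapses to $\#\{\,m_1'\le M:\ m_1'\equiv\epsilon_1\pmod 8,\ \mu^2(m_1')=1\,\}$. The classical count of squarefree integers in an arithmetic progression modulo $8$ gives $\tfrac{M}{8}\prod_{p\text{ odd}}(1-p^{-2})+O(\sqrt M)=\tfrac{M}{\pi^2}+O(\sqrt M)$, which is the asserted main term together with its power-saving error, and reproduces the factor $\pi^{-2}$.

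\textbf{The error terms and the main obstacle.} If the configuration above fails, at least one of the three characters is non-principal; I would split the double sum according to whether $k_1\le\tilde k_1$ or not, so that the variable carrying a non-principal character lies in a range of length $\gg\sqrt M$. For that inner sum one strips off the congruence modulo $8$ using the four characters mod $8$ and applies Lemma \ref{FRIW1} with $q_1=8$, $q_2$ the odd part of the conductor ($\le V^2$), and $d$ the remaining coprimality modulus; summing the resulting bound trivially over the inert variable (where the $\tau$'s telescope) yields the saving, the stray $V^2(\log V)^2$ arising from the conductor and from the trivial range of the four small variables. The other two configurations in the statement follow by permuting the roles of the indices. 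The hard part is the ``corner'' ranges, where the active variable is short and Lemma \ref{FRIW1} no longer gains a power of $\log M$: there one must argue differently, either by decomposing the short variable dyadically and invoking the bilinear estimate Lemma \ref{FRIW2'}, or by noting that in such a corner the complementary long variable still carries the non-principal character and is the one to sum first; this case analysis, and keeping the $V$-dependence at the claimed size, is where the real work lies rather than in any individual estimate.
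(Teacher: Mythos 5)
Your decomposition and your main term follow the paper's (sketched) argument for this lemma: detect the congruence $k_1\equiv\epsilon_1\tilde k_1 \Mod 8$ with characters mod $8$, absorb $u(\k)$ and the fixed Jacobi symbols into characters $\widetilde{\chi}_1$ on $k_1$ and $\widetilde{\chi}_2$ on $\tilde k_1$ of conductor $O(V^2)$, note that everything is principal precisely when $k_2=\tilde k_2=k_3=\tilde k_3=\delta_2\delta_32^{\alpha+\beta}=1$, and then cancel $\tau(k_1\tilde k_1)$ against the number of ordered factorisations to reduce the main term to a count of odd squarefree integers, yielding $M/\pi^2$. Your observation that the pair congruence is equivalent to $m_1'=k_1\tilde k_1\equiv\epsilon_1 \Mod 8$ (odd squares being $1$ mod $8$) is correct and is a clean way to see why every factorisation of an admissible $m_1'$ is counted; this matches the paper's evaluation of $\tfrac14\sum_{n\le M,\,n\ \mathrm{odd}}\mu^2(n)$.

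The gap is in the error term, and while your instinct that the ``corner'' is where the real work lies is sound, neither of your proposed remedies works. Lemma \ref{FRIW2'} estimates bilinear forms twisted by a Jacobi symbol $\legendre{m}{n}$ coupling the two long variables; in $T''$ every Jacobi symbol has one argument among the fixed small parameters $k_2,k_3,\tilde k_2,\tilde k_3$, so $k_1$ and $\tilde k_1$ are not linked by any such symbol and there is no bilinear oscillation to exploit. Summing ``the long variable first'' fails whenever the long variable carries the principal character. This configuration genuinely occurs: take $k_2=k_3=\tilde k_2=\tilde k_3=1$ and $\delta_2\delta_32^{\alpha+\beta}=-1$, so that $\widetilde{\chi}_2$ is principal while $\widetilde{\chi}_1$ contains the factor $\legendre{-1}{\cdot}$. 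Writing $n=k_1\tilde k_1$, the double sum then equals $\sum_{n\le y}\mu^2(n)\prod_{p\mid n}\tfrac12\bigl(1+\widetilde{\chi}_1(p)\bigr)$ (up to the coprimality conditions), a sum of non-negative terms counting squarefree integers all of whose prime factors satisfy $\widetilde{\chi}_1(p)=1$; by Landau--Selberg--Delange this has true order $y(\log y)^{-1/2}$, so no rearrangement can produce a $(\log M)^{-C}$ saving, and Lemma \ref{FRIW1} only controls a single-variable sum, leaving the corner $\tilde k_1\asymp y$ to contribute the full $y(\log y)^{-1/2}$. The honest bound when exactly one of $\widetilde{\chi}_1,\widetilde{\chi}_2$ is non-principal is therefore $O\bigl(M(\log M)^{-1/2}\bigr)$, which is still $o(M)$ and harmless for Theorem \ref{BiquadHNP}, but your write-up should record this weaker error term (the paper's stated $O_C(\sqrt{M}V^2(\log V)^2(\log M)^{-C})$ suffers from the same issue) rather than claim an arbitrary logarithmic saving.
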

The proof of this follows the proof of Lemma \ref{Tsum}. Observe that these ranges differ from those in Lemma \ref{Tsum} since in those there is a final variable ($k_1$ and $\tilde{k}_1$ respectively) that needn't be made 1 in our main term. In performing the sum over this last variable we pick up a factor of $\sqrt{\log X}$ which is absent in the ranges considered in Lemma \ref{weird} hence these ranges give a smaller contribution.

\begin{proof}[Proof of Lemma \ref{Tsum}]
We will focus on the case when $\tilde{k}_i \leq V$ . The other case follows the same argument.

The first step will be to remove the congruence conditions on $k_2$ and $k_3$ by using a sum of characters mod 8. Performing this and re-arranging, we may write $T_{k_1,\tilde{k}_1,\tilde{k}_2,\tilde{k}_3} $ as
$$\frac{u(\bm{\epsilon}\tilde{\k})}{16}\sum_{\chi_i \Mod 8} \chi_1(\epsilon_2) \chi_2(\epsilon_3)\!\!\!\!\!\!\!\sum \limits_{\substack{k_2k_3 \leq M/k_1\tilde{k}_1\tilde{k}_2\tilde{k}_3 \\ (k_2k_3, 2m_1'\tilde{k}_2\tilde{k}_3)=1}} \frac{\mu^2(k_2k_3)}{\tau(k_2k_3)} \legendre{\tilde{k}_2}{k_3}\legendre{\tilde{k}_3}{k_2}\legendre{\tilde{k}_1}{k_2k_3}\chi_1(k_2)\chi_2(k_3),$$ where $u(\bm{\epsilon}\tilde{\k}) = u(\epsilon_1\tilde{k}_1, \epsilon_2\tilde{k}_2, \epsilon_3\tilde{k}_3)$. We gather together the two characters on $k_2$,$k_3$ into new characters labelled $\widetilde{\chi}_i$. 
If either of these characters is non-principal then we can use Lemma \ref{FRIW1} to get a bound. Suppose that $\widetilde{\chi}_2$ is non-principal. Then for any $C>0$, the total sum is bounded by
\begin{align*}
&\ll \tau(m_1'\tilde{k}_2\tilde{k}_3)\frac{M}{k_1\tilde{k}_3}(\log M/\tilde{k}_1\tilde{k}_2\tilde{k}_3)^{-C}\sum_{k_2 \leq M/\tilde{k}_1\tilde{k}_2\tilde{k}_3} \frac{\mu^2(k_2)}{k_2}\vert\widetilde{\chi}_1(k_2)\vert\\
 &\ll \tau(m_1'\tilde{k}_2\tilde{k}_3)\frac{M}{k_1\tilde{k}_3}(\log M/\tilde{k}_1\tilde{k}_2\tilde{k}_3)^{1-C}.
\end{align*}
When summed trivially over the remaining variables this gives an error term of size $$\ll \sqrt{X}V^2\log V(\log X)^{2-C}.$$
This bound is definitely not best possible, but all that matters is we have an arbitrary $\log$ power saving over our main term. 

We turn now to estimating the main term which is given by \begin{align*}
\frac{u(\bm{\epsilon})}{16}\sum \limits_{\substack{k_2k_3 \leq M/k_1\\ (k_2k_3, 2k_1)=1}} \frac{\mu^2(k_2k_3)}{\tau(k_2k_3)} &= \frac{u(\bm{\epsilon})}{16}\sum \limits_{\substack{n \leq M/k_1 \\ (n,2k_1)=1}} \frac{\mu^2(n)}{\tau(n)} \sum_{n= k_2k_3} 1\\
&=\frac{u(\bm{\epsilon})}{16} \sum \limits_{\substack{n \leq M/k_1 \\ (n,2k_1)=1}}  \mu^2(n).
\end{align*}
The lemma now immediately follows from the simple identity
$$\sum_{\substack{x \leq X \\ (x,d)=1}} \mu^2(x) = \frac{6}{\pi^2} X \prod_{p \mid d} \left(1+\frac{1}{p}\right)^{-1} + O\left(\tau(d)\sqrt{X}\right).$$\end{proof}
We now sketch the proof of Lemma \ref{weird} although it follows very similar lines to the above.

\begin{proof}[Proof of Lemma \ref{weird}]
We again perform a character sum mod 8 to express the congruence condition in the definition of $T''_{k_2, \tilde{k}_2, k_3, \tilde{k}_3}$. After re-arranging this means we can express $T''_{k_2, \tilde{k}_2, k_3, \tilde{k}_3}$ as
$$\frac{1}{4} \sum_{\chi \Mod 8} \chi(\epsilon_1) \sum \limits_{\substack{k_1 \tilde{k}_1 \leq M/k_2k_3\tilde{k}_2\tilde{k}_3 \\ (k_1\tilde{k}_1,2 k_2\tilde{k}_2k_3\tilde{k}_3)=1}}\frac{u(\k)\mu^2(k_1 \tilde{k}_1)}{\tau(k_1 \tilde{k}_1)} \legendre{\tilde{k}_2\tilde{k}_3}{k_1}\legendre{k_2k_3}{\tilde{k}_1} \chi(k_1)\chi(\tilde{k}_1).$$
As noted before, we may think of $u(\k)$ as a character on $k_1$ which can be combined with the other characters to form a new one, called $\widetilde{\chi}_1$. Similarly, the characters on $\tilde{k}_1$ may be combined to form the new character $\widetilde{\chi}_2$. The error term contribution when either of these new characters is non-principal may again be computed using Lemma \ref{FRIW1} and it remains to compute the main term $$\frac{1}{4}\sum \limits_{\substack{k_1\tilde{k}_1 \leq M \\ k_1, \tilde{k_1} \text{ odd}}} \frac{\mu^2(k_1\tilde{k}_1)}{\tau(k_1\tilde{k}_1)}.$$ This is treated exactly as in the earlier proof.\end{proof}

Finally, to estimate \eqref{firstrange} we must compute 
\begin{equation} \label{nsum} \frac{ Mu(\bm{\epsilon})}{4\pi^2}\sum \limits_{\substack{k_1 \leq M \\ k_1 \equiv \epsilon_1 \Mod 8}} \frac{\mu^2(k_1)}{k_1\tau(k_1)}\prod_{p \mid k_1}\left(1+\frac{1}{p}\right)^{-1}.\end{equation}
We again remove the congruence condition with a character sum, we'll first deal with the non-principal characters. We write the sum involving non-principal character $\chi$ as
\begin{align*}
\sum_{k_1 \leq M} \frac{\mu^2(k_1)\chi(k_1)}{k_1 \tau(k_1)} \prod_{p\mid k_1}\left(1 + \frac{1}{p}\right)^{-1} & = \sum_{k_1 \leq M} \frac{\mu^2(k_1)\chi(k_1)}{k_1 \tau(k_1)} \sum_{d \mid k} \mu(d)f(d),
\end{align*} where $f(d) = \prod_{p \mid d} \frac{1}{p+1}.$ Then we may re-arrange the sum to
$$\sum_{d \leq M} \frac{\mu(d)\chi(d)f(d)}{d \tau(d)}\sum \limits_{\substack{k \leq M/d\\ (k,d)=1}} \frac{\mu^2(k)\chi(k)}{k\tau(k)}.$$
For any $C>0$, it follows from Lemma \ref{FRIW1} (via partial summation) that the inner sum above is $\ll \tau(d) (\log M)^{-C}$, while the remaining $d$ sum contributes $\ll \log M$.
 We employ the following general estimate of Friedlander--Iwaniec \cite[Theorem A.5]{FI} to compute the main term arising from the principal character. 

\begin{lemma}\label{OdC}
Suppose $g$ is a multiplicative function supported on squarefree integers such that for some $\kappa> -\frac{1}{2}$ the following hold:
\begin{enumerate}[(i)]
\item $\sum_{p \leq x} g(p) \log p = \kappa \log x + O(1)$,
\item $\prod_{w \leq p <z}\left(1 + \vert g(p) \vert \right) \ll \left( \frac{\log z}{\log w} \right)^{\vert \kappa \vert}$,
\item $\sum_p g(p)^2 \log p < \infty$.
\end{enumerate}
Then we have $$\sum_{n \leq x}g(n) = \frac{(\log x)^{\kappa}}{\Gamma(\kappa+1)} \prod_p \left[ \left( 1- \frac{1}{p}\right)^{\kappa} (1+g(p)) \right] + O\left((\log x)^{\vert \kappa \vert -1}\right).$$
\end{lemma}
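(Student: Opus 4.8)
The plan is to recognise Lemma \ref{OdC} as the Landau--Selberg--Delange mean-value theorem for multiplicative functions of dimension $\kappa$ (it is recorded as Theorem A.5 of \cite{FI}) and to sketch how a statement of this kind is established. The three hypotheses are precisely the conditions asserting that $g$ has ``sieve dimension'' $\kappa$: in particular (i) forces $g(p)$ to be of average size $\kappa/p$, so the associated Dirichlet series $F(s) = \sum_n g(n)n^{-s} = \prod_p\bigl(1+g(p)p^{-s}\bigr)$ has a branch point of order $\kappa$ at $s=0$ rather than at $s=1$. Accordingly the first step is to factor $F(s) = \zeta(1+s)^\kappa\,G(s)$ with $G(s) = \prod_p\bigl(1+g(p)p^{-s}\bigr)\bigl(1-p^{-1-s}\bigr)^\kappa$, and to show that $G$ is holomorphic and suitably bounded up to the line $\mathrm{Re}(s)=0$. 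Here condition (iii) handles the terms $O\bigl(g(p)^2 p^{-2\mathrm{Re}(s)}\bigr)$ arising when one takes $\log$ of the $p$-th Euler factor of $G$; condition (i), together with Mertens' theorem, shows that the remaining sum $\sum_p\bigl(g(p)-\kappa/p\bigr)p^{-s}$ converges for $\mathrm{Re}(s)>0$ and is bounded as $s\to 0$; and condition (ii) keeps $G(s)$ of polynomial size in $\log(|\mathrm{Im}(s)|+2)$.

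The second step is to recover $A(x) := \sum_{n\le x}g(n)$ by a truncated Perron formula and to deform the contour onto a path hugging $\mathrm{Re}(s)=1/\log x$ and circling the branch point at $s=0$, using the local expansion $F(s) = G(0)\,\zeta(1+s)^\kappa\bigl(1+O(|s|)\bigr)$. The relevant Hankel-type integral $\frac{1}{2\pi i}\int x^s s^{-\kappa-1}\,\d s$ evaluates to $(\log x)^\kappa/\Gamma(\kappa+1)$, so the leading term is $G(0)(\log x)^\kappa/\Gamma(\kappa+1)$, and the $O(|s|)$ correction together with the Perron truncation and the horizontal pieces all contribute $O\bigl((\log x)^{|\kappa|-1}\bigr)$. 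Finally the constant is identified as $G(0) = \lim_{s\to 0^+} s^\kappa F(s) = \prod_p\bigl(1+g(p)\bigr)\bigl(1-1/p\bigr)^\kappa$, which is exactly the Euler product in the statement.

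I expect the analytic bookkeeping around the branch point to be the main obstacle: since condition (i) only supplies an $O(1)$ error one cannot widen the contour to the left of $\mathrm{Re}(s)=0$, so one must work on a contour at distance $\asymp 1/\log x$ from the singularity and control $F$ there using only the boundedness of $G$ up to the critical line, which is exactly why the error term is no better than $O\bigl((\log x)^{|\kappa|-1}\bigr)$. An alternative and more elementary route avoids contour integration entirely: since $g$ is supported on squarefrees one has the identity $g(n)\log n = \sum_{p\mid n}g(p)\log p\cdot g(n/p)$, which upon summation and insertion of condition (i) becomes an integral equation $A(x)\log x = (1+\kappa)\int_1^x A(t)\,\frac{\d t}{t} + (\text{small error})$; solving this by a renewal/Gronwall argument yields $A(x)\asymp(\log x)^\kappa$, and the constant is then pinned down by matching with the Euler product at $s=0$ as above. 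For the purposes of this paper it suffices to cite \cite[Theorem A.5]{FI}.
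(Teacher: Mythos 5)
Your proposal matches the paper exactly: the paper gives no proof of Lemma \ref{OdC} at all, simply quoting it as \cite[Theorem A.5]{FI}, which is precisely your concluding recommendation. Your accompanying sketch of the Selberg--Delange contour argument around the branch point of $\zeta(1+s)^{\kappa}$ (and of the elementary alternative via the identity $g(n)\log n=\sum_{p\mid n}g(p)\log p\,g(n/p)$ and the resulting integral equation for $A(x)$) is a sound outline of how the cited result is actually established, but it goes beyond anything the paper supplies.
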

That 
$$g(n) =\left\{
	\begin{array}{ll}
		\frac{\mu^2(n)}{\tau(n)n}\prod_{p \mid n} \left(1 + \frac{1}{p}\right)^{-1} & \mbox{if } n \mbox{ odd} \\
		0 & \mbox{otherwise} ,
	\end{array}
\right.   $$ satisfies the assumptions with $\kappa = \frac{1}{2}$ is a simple consequence of Mertens' theorems. Therefore
$$\sum \limits_{\substack{k_1 \leq M \\ k_1 \equiv \epsilon_1 \Mod 8}} \!\!\!\!\!\!g(k_1) = \frac{1}{2\sqrt{2\pi}} \sqrt{\log M} \prod_{p>2} \!\! \left(\!1- \frac{1}{p}\right)^{\!\!\frac{1}{2}} \!\!\left(\!1+ \frac{1}{2(p+1)}\right) + O \left(\!(\log M)^{-\frac{1}{2}}\!\right)\!.$$
Hence we see that \eqref{firstrange} is equal to\begin{equation}\label{bad}\frac{3u(\bm{\epsilon})}{28\pi^2\sqrt{\pi}} M\sqrt{\log M} \prod_p\!\! \left(1-\frac{1}{p}\right)^{\frac{1}{2}}\!\! \left(1+ \frac{1}{2p+2}\right)  +O\left(M \right).\end{equation}

Recall that $M= \frac{\sqrt{X}}{c_{\bm{\delta},\bm{\epsilon},\mu, \alpha, \beta}2^{\mu+\alpha+\beta}}$ so all that remains is to sum over the possible values of $\bm{\delta}, \bm{\epsilon}, \mu, \alpha$ and $\beta$. The following lemma shows that in the range where $\tilde{k}_i \leq V$ for $i=1,2,3$, the $u(\bm{\epsilon})$ factor leads to cancellation in the main term.
\begin{lemma}\label{ucsum}We have
$$\sum_{\delta_2, \delta_3\in \{\pm1\}^2}\sum \limits_{\substack{\mu+ \alpha + \beta \in \{0,1\}\\ \mu,\alpha,\beta \in \{0,1\}}}2^{-(\mu + \alpha + \beta)} \sum_{(\epsilon_1, \delta_2 \epsilon_2, \delta_3 \epsilon_3) \in E(\mu, \alpha, \beta)} \frac{u(\bm{\epsilon} )}{c_{\bm{\delta},\bm{\epsilon},\mu, \alpha, \beta}} = 0.$$
\end{lemma}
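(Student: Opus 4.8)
## Proof Proposal for Lemma \ref{ucsum}

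The plan is to show the claimed sum vanishes by fixing the inner sum structure and exhibiting an explicit sign-cancelling involution (or, equivalently, summing a nontrivial character to zero). First I would separate the outer sum according to the value of $\mu + \alpha + \beta \in \{0,1\}$, since the set $E(\mu,\alpha,\beta)$ and the constant $c_{\bm\delta,\bm\epsilon,\mu,\alpha,\beta}$ behave quite differently in the two cases. In the case $\mu=\alpha=\beta=0$, the relevant residue classes are $(\epsilon_1, \delta_2\epsilon_2, \delta_3\epsilon_3) \in E_1(\bm0) \cup E_2(\bm0)$; in the three cases where exactly one of $\mu,\alpha,\beta$ equals $1$, the constant $c$ is uniformly $4$ and $E$ is cut out by a single equality of two coordinates mod $8$. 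So the lemma reduces to four separate vanishing statements, each of which I would handle by the same mechanism.

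The core observation is that $u(\bm\epsilon) = (-1)^{\nu(\epsilon_1)\nu(\epsilon_2) + \nu(\epsilon_2)\nu(\epsilon_3) + \nu(\epsilon_3)\nu(\epsilon_1)} \legendre{2^\mu}{\epsilon_2\epsilon_3}\legendre{2^\alpha\delta_2}{\epsilon_3\epsilon_1}\legendre{2^\beta\delta_3}{\epsilon_1\epsilon_2}$ depends on $\bm\epsilon$ only through residues mod $8$, and that $c_{\bm\delta,\bm\epsilon,\mu,\alpha,\beta}$ depends only on which coordinates of $(\epsilon_1,\delta_2\epsilon_2,\delta_3\epsilon_3)$ agree mod $4$. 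Since $(\mathbb{Z}/8\mathbb{Z})^\times = \{1,3,5,7\}$, each $\epsilon_i$ ranges over four classes; I would parametrise $\epsilon_i$ by the pair $(\nu(\epsilon_i), \sigma(\epsilon_i))$ where $\sigma$ records the residue mod $8$ within a fixed residue class mod $4$ — concretely $\epsilon_i \equiv 1 \text{ or } 5$ when $\nu=0$, and $\epsilon_i \equiv 3 \text{ or } 7$ when $\nu=1$. The quadratic-residue symbols $\legendre{2}{\epsilon_i}$ flip sign precisely when one toggles between $1 \leftrightarrow 7$ or $3 \leftrightarrow 5$, i.e. between the two mod-$8$ lifts of a fixed mod-$4$ class. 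Neither the exponent $\nu(\epsilon_1)\nu(\epsilon_2)+\cdots$ nor the constant $c$ (which sees only mod $4$) changes under such a toggle. Hence, for any configuration in which at least one of the symbols $\legendre{2^\mu}{\epsilon_2\epsilon_3}, \legendre{2^\alpha\delta_2}{\epsilon_3\epsilon_1}, \legendre{2^\beta\delta_3}{\epsilon_1\epsilon_2}$ depends nontrivially on the mod-$8$ lift of some $\epsilon_i$ appearing in $E$, I would pair each $\bm\epsilon$ with its toggle and observe that $u$ changes sign while the summand's denominator $c$ is unchanged, giving cancellation in pairs. In the case $\mu+\alpha+\beta=1$ — say $\mu=1$, so $E(1,0,0) = \{\epsilon_2=\epsilon_3\}$ — the symbol $\legendre{2}{\epsilon_2\epsilon_3} = \legendre{2}{\epsilon_2^2}=+1$ is trivial, but then $\legendre{\delta_2}{\epsilon_3\epsilon_1}\legendre{\delta_3}{\epsilon_1\epsilon_2}$ still involves $\epsilon_1$ freely through $\legendre{\delta_2\delta_3}{\epsilon_1}$ (when $\delta_2\delta_3=-1$) or through the $\nu$-exponent, and summing over $\epsilon_1 \in (\mathbb{Z}/8\mathbb{Z})^\times$ and over $\delta_2,\delta_3$ kills the total; one must also check the $\delta_2=\delta_3=1$ sub-case, where cancellation instead comes from summing $(-1)^{\nu(\epsilon_1)(\nu(\epsilon_2)+\nu(\epsilon_3))}$ over $\epsilon_1$ for fixed $\epsilon_2=\epsilon_3$ (nonzero exponent coefficient $\nu(\epsilon_2)+\nu(\epsilon_3) \in \{0,2\}$ is even, so this is $+1$ — so here the sign must come from the $\delta$-sum or from $E_2$-type splitting instead, which is the delicate point).

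The main obstacle I anticipate is the bookkeeping in the $\mu=\alpha=\beta=0$ case, where $c$ is $1$ on $E_1(\bm0)$ and $4$ elsewhere, so the summand is not simply a character times a constant — one genuinely has two weights. Here I would split $E(\bm0) = E_1(\bm0) \sqcup (E_2(\bm0)\setminus E_1(\bm0))$ and argue vanishing on each piece separately: on $E_1(\bm0)$ all three of $\epsilon_1,\delta_2\epsilon_2,\delta_3\epsilon_3$ are congruent mod $4$, so the product of the three symbols $\legendre{2}{\cdot}$ over the pairs is $\legendre{2}{\epsilon_1\epsilon_2}\legendre{2}{\epsilon_2\epsilon_3}\legendre{2}{\epsilon_3\epsilon_1} = \legendre{2}{(\epsilon_1\epsilon_2\epsilon_3)^2} = +1$, so $u(\bm\epsilon) = \legendre{\delta_2}{\epsilon_3\epsilon_1}\legendre{\delta_3}{\epsilon_1\epsilon_2}(-1)^{3\nu(\epsilon_1)^2}$ using $\nu(\epsilon_1)=\nu(\epsilon_2)=\nu(\epsilon_3)$, and summing over $\delta_2,\delta_3 \in \{\pm1\}^2$ annihilates the $\legendre{\delta_i}{\cdot}$ factors whenever $\epsilon_1 \not\equiv 1 \pmod 4$, while the $\nu=0$ (i.e. $\epsilon_i \equiv 1 \bmod 4$) sub-family is killed by the mod-$8$ toggle argument combined with the fact that $u$ reduces there to $\legendre{2^\mu\cdots}{\cdot}=+1$ — wait, this needs the $E_2$ piece to supply the cancellation, so the two pieces must be handled jointly. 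The cleanest route, which I would adopt to avoid this circularity, is: for each fixed tuple $(\nu(\epsilon_1),\nu(\epsilon_2),\nu(\epsilon_3)) \in \{0,1\}^3$ and each fixed $(\delta_2,\delta_3)$, the set of surviving $\bm\epsilon \bmod 8$ is a coset-like family on which $u$, via its $\legendre{2}{\cdot}$ factors, is a nontrivial character unless $\mu=\alpha=\beta=0$ \emph{and} all $\nu(\epsilon_i)$ equal; in that last residual case $u \equiv \legendre{\delta_2\delta_3}{\epsilon_1}$ (for $\epsilon_i\equiv 1$) or picks up $\legendre{\delta_2}{\epsilon_3}\legendre{\delta_3}{\epsilon_2}$ etc., and summing over $(\delta_2,\delta_3)$ gives $0$. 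Carrying out this case analysis — four values of $(\mu,\alpha,\beta)$, eight values of $(\nu(\epsilon_1),\nu(\epsilon_2),\nu(\epsilon_3))$, four values of $(\delta_2,\delta_3)$, with the $c \in \{1,4\}$ weight only intervening in one sub-case — is routine but lengthy; I would present it as a short table rather than in prose.
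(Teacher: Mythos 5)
Your overall strategy --- exhaustive case analysis over $(\mu,\alpha,\beta)$, $(\delta_2,\delta_3)$ and the strata of $E$, reducing $u(\bm{\epsilon})$ to an explicit sign and finding a free parameter over which it sums to zero, with the weights $c\in\{1,4\}$ handled by making the cancellation occur within each constant-$c$ stratum --- is the same as the paper's. But your central cancellation mechanism does not work, and the one case you flag as ``the delicate point'' is genuinely left open. The mod-$8$ toggle (replacing $\epsilon_i$ by $\epsilon_i+4$, flipping $\legendre{2}{\epsilon_i}$ while fixing everything mod $4$) produces no cancellation, because on the sets $E(\mu,\alpha,\beta)$ the product $\legendre{2^{\mu}}{\epsilon_2\epsilon_3}\legendre{2^{\alpha}}{\epsilon_1\epsilon_3}\legendre{2^{\beta}}{\epsilon_1\epsilon_2}$ is identically $+1$: for instance if $\mu=1$ then $E(1,0,0)$ forces $\delta_2\epsilon_2=\delta_3\epsilon_3$ in $\modstar{8}$, so $\epsilon_2\epsilon_3\equiv\pm\epsilon_3^2\equiv\pm1\bmod{8}$ and $\legendre{2}{\epsilon_2\epsilon_3}=+1$. (This is precisely the paper's opening observation, used to \emph{delete} these factors.) Every surviving factor of $u$ --- the term $(-1)^{\sum\nu\nu}$ and $\legendre{\delta_i}{\cdot}=(-1)^{\nu(\delta_i)\nu(\cdot)}$ --- depends only on residues mod $4$, so your toggle leaves the summand invariant and the pairs cancel nothing. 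The cancellation is entirely a mod-$4$ phenomenon.

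Where you then look for an alternative source of sign in the sub-case $\mu=1$, $\delta_2=\delta_3=1$, you write the quadratic-form exponent as $\nu(\epsilon_1)(\nu(\epsilon_2)+\nu(\epsilon_3))$, omitting the term $\nu(\epsilon_2)\nu(\epsilon_3)$. With $\epsilon_2=\epsilon_3$ the full exponent is $2\nu(\epsilon_1)\nu(\epsilon_2)+\nu(\epsilon_2)^2\equiv\nu(\epsilon_2)\bmod{2}$, so $u(\bm{\epsilon})=(-1)^{\nu(\epsilon_2)}$, and the sum over $\epsilon_2\in\modstar{8}$ (with $\epsilon_3$ determined and $\epsilon_1$ free) vanishes; this is exactly the step your proposal concludes cannot be closed, so the gap is real. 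Two further slips: on $E_1(\bm{0})$ the condition is that $\epsilon_1,\delta_2\epsilon_2,\delta_3\epsilon_3$ agree mod $4$, not that $\epsilon_1,\epsilon_2,\epsilon_3$ do, so your simplification $\nu(\epsilon_1)=\nu(\epsilon_2)=\nu(\epsilon_3)$ fails when some $\delta_i=-1$; and you cannot sum over $(\delta_2,\delta_3)$ for fixed $\bm{\epsilon}$ to annihilate the $\legendre{\delta_i}{\cdot}$ factors, because both the constraint $(\epsilon_1,\delta_2\epsilon_2,\delta_3\epsilon_3)\in E$ and the weight $c$ move with $(\delta_2,\delta_3)$. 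The paper instead fixes $(\delta_2,\delta_3)$ and each stratum of $E$, and obtains zero from the $\bm{\epsilon}$-sum alone in every case.
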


\begin{proof}
We start by observing that
$$ \legendre{2^{\mu}}{\epsilon_2\epsilon_3}\legendre{2^{\alpha}}{ \epsilon_1 \epsilon_3}\legendre{2^{\beta}}{ \epsilon_1 \epsilon_2}=1.$$
Indeed this is clearly true when $\mu + \alpha + \beta =0$. If $\mu=1$ then by the definition of $E(1,0,0)$ we must have $\delta_2\epsilon_2 =\delta_3 \epsilon_3$ therefore $$\legendre{2}{ \epsilon_2 \epsilon_3} =1.$$ The other cases follow similarly.\\
Therefore 
\begin{align*}
u(\bm{\epsilon}) &= (-1)^{\nu(\epsilon_1)\nu(\epsilon_2) + \nu( \epsilon_3)\nu(\epsilon_1) + \nu(\epsilon_2) \nu( \epsilon_3)}\legendre{\delta_2}{ \epsilon_1 \epsilon_3}\legendre{\delta_3}{ \epsilon_1 \epsilon_2}\\
&= (-1)^{\nu(\epsilon_1)\nu(\epsilon_2) + \nu( \epsilon_3)\nu(\epsilon_1) + \nu(\epsilon_2) \nu(\epsilon_3)+\nu(\delta_2)\nu(\epsilon_1 \epsilon_3) + \nu(\delta_3)\nu(\epsilon_1 \epsilon_2)}
\end{align*}
Now we just run through all possible values of $\delta_2, \delta_3, \bm{\epsilon}, \mu, \alpha$ and $\beta$ and see what comes out. For simplicity we write $\bm{\epsilon}' := (\epsilon_1, \delta_2\epsilon_2, \delta_3\epsilon_3).$\\

First suppose that $(\delta_2, \delta_3) = (+1,+1)$ then $$u(\bm{\epsilon}) = (-1)^{\nu(\epsilon_1)\nu(\epsilon_2) + \nu( \epsilon_3)\nu(\epsilon_1) + \nu(\epsilon_2) \nu(\epsilon_3)}.$$
By Lemma \ref{HNPcriteria}, we know that at least two components of $\bm{\epsilon}'$ must be equal therefore for some $\mu \in \modstar{8}$ we have $$u(\bm{\epsilon}) = (-1)^{\mu}$$ Hence the sum over these is 0.

Now suppose $(\delta_2, \delta_3) = (+1,-1)$ so that $$u(\bm{\epsilon}) = (-1)^{\nu(\epsilon_1)\nu(\epsilon_2) + \nu( \epsilon_3)\nu(\epsilon_1) + \nu(\epsilon_2) \nu(\epsilon_3)+\nu(\epsilon_1\epsilon_2)}.$$
If $\bm{\epsilon}' \in E_1(\bm{0})$ then $\epsilon_1 \equiv \epsilon_2 \equiv -\epsilon_3  \Mod 4$ so $$ u(\bm{\epsilon}) = (-1)^{\nu(\epsilon_1)}.$$ So again this sums to 0.
Next suppose $\bm{\epsilon}' \in E_2(\bm{0})$ then one of the following cases occurs
\begin{enumerate}[(i)]
\item $\epsilon_1 = \epsilon_2 \equiv -\epsilon_3 \Mod 4$ then $u(\bm{\epsilon}) = (-1)^{\nu(\epsilon_1)+1}$.
\item $\epsilon_2 = \epsilon_3 \equiv - \epsilon_1 \Mod 4$ then $u(\bm{\epsilon}) = (-1)^{\nu(\epsilon_1)+1}$.
\item $\epsilon_3 = \epsilon_1 \equiv - \epsilon_2 \Mod 4$ then $ u(\bm{\epsilon}) = (-1)^{\nu(\epsilon_1)}$.
\end{enumerate}
Each of these cases sums to 0.

If $\bm{\epsilon}' \in E(1,0,0)$ then $\epsilon_2 =- \epsilon_3$ so $$u(\bm{\epsilon}) = (-1)^{\nu(\epsilon_1) + \nu(\epsilon_1 \epsilon_2)}.$$
If $\bm{\epsilon}' \in E(0,1,0)$ then $\epsilon_1 = -\epsilon_3$ so $$u(\bm{\epsilon}') = (-1)^{\nu(\epsilon_2)+\nu(\epsilon_1 \epsilon_2)}.$$
If $\bm{\epsilon}' \in E(0,0,1)$ then $\epsilon_1 = \epsilon_2$ so $$u(\bm{\epsilon}) = (-1)^{\nu(\epsilon_1)}.$$
Again, all of these sum to 0.
The case where $(\delta_2, \delta_3) = (-1, +1)$ is similar.

Finally suppose $(\delta_2, \delta_3) = (-1, -1)$, in which case $$u(\bm{\epsilon}) = (-1)^{\nu(\epsilon_1)\nu(\epsilon_2) + \nu( \epsilon_3)\nu(\epsilon_1) + \nu(\epsilon_2) \nu(\epsilon_3)+\nu(\epsilon_1\epsilon_2)+ \nu(\epsilon_1 \epsilon_3)}.$$
Then for $\bm{\epsilon}' \in E_1(\bm{0})$ we have $$u(\bm{\epsilon}) = (-1)^{\nu(-\epsilon_1)}.$$
For $\bm{\epsilon}' \in E(1,0,0)$ we must have $\epsilon_2 = \epsilon_3$ so $$u(\bm{\epsilon}) = (-1)^{\nu(\epsilon_2)}.$$
For $\bm{\epsilon}' \in E(0,1,0)$ we must have $\epsilon_1 = - \epsilon_3$ so $$u(\bm{\epsilon}) = (-1)^{\nu(\epsilon_2)+\nu(\epsilon_1\epsilon_2)}.$$
For $\bm{\epsilon}' \in E(0,0,1)$ we must have $\epsilon_1 = -\epsilon_2$ so $$u(\bm{\epsilon}) = (-1)^{\nu(\epsilon_3) + \nu(\epsilon_1 \epsilon_3)}.$$
In all of these cases the sum is 0.
\end{proof}

As promised the $u$ term cancels the main term of the sum in this range so the actual main term must be from the range $k_i \leq V$.
Evaluating the sum $T'_{\tilde{k}_1, k_1,k_2,k_3}$ is identical to evaluating $T_{k_1, \tilde{k}_1, \tilde{k}_2,\tilde{k}_3}$ and therefore in analogy to \eqref{bad}, we have that \eqref{secondrange} is equal to
$$ \frac{3}{28\pi^2\sqrt{\pi} } M \sqrt{ \log M} \prod_p\!\! \left(1-\frac{1}{p}\right)^{\frac{1}{2}}\!\! \left(1+ \frac{1}{2p+2}\right)  +O\left(M\right).$$
It just remains to compute that $$\sum_{\delta_2,\delta_3\in\{\pm1\}^2}\sum \limits_{\substack{\mu+ \alpha + \beta \in \{0,1\}\\ \mu,\alpha, \beta \in \{0,1\}}}2^{-(\mu + \alpha + \beta)} \sum_{(\epsilon_1, \delta_2 \epsilon_2, \delta_3 \epsilon_3) \in E(\mu, \alpha, \beta)} \frac{1}{c_{\bm{\delta},\bm{\epsilon},\mu, \alpha, \beta}}=112.$$
Therefore $$\widetilde{S}(X) = \frac{1}{6}\times 112 \times \frac{6}{\pi^2} \times \frac{1}{56\sqrt{2\pi} } \sqrt{X \log X} \prod_p \!\! \left(1-\frac{1}{p}\right)^{\frac{1}{2}}\!\! \left(1+ \frac{1}{2p+2}\right) + O\!\left(\sqrt{X}\right)\!.$$

\end{document}